\newtheorem{claim}{Claim}
\newtheorem{thm}{Theorem}[section]
\newtheorem{lem}[thm]{Lemma}
\newtheorem{obs}[thm]{Observation}
\definecolor{darkred}{rgb}{1, 0.1, 0.3}
\definecolor{darkblue}{rgb}{0.1, 0.1, 1}
\definecolor{darkgreen}{rgb}{0,0.6,0.5}
\newcommand {\mm}[1] {\ifmmode{#1}\else{\mbox{\(#1\)}}\fi}
\def\thanks#1{\protected@xdef\@thanks{\@thanks
        \protect\footnotetext{#1}}}
\begin{document}

\title{Saturation Numbers for Linear Forests $P_7+tP_2$
\thanks{$^{*}$ Corresponding authors.
\newline\indent\indent\indent\small \emph{Email address:} 23121715@bjtu.edu.cn (Y. Zhang), rxhao@bjtu.edu.cn (R.-X. Hao), zhenhe@bjtu.edu.cn (Z. He), zhuwenhanzwh@163.com (W.-H. Zhu)}
}
\author{Yu Zhang, Rong-Xia Hao, Zhen He$^{*}$,
Wen-Han Zhu
\\
{\it\small  School of Mathematics and Statistics, Beijing Jiaotong University, Beijing 100044, P.R. China.}
}

\date{}

\maketitle

\begin{abstract}

Let $H$ be a fixed graph, a graph G is $H$-saturated if it has no copy of $H$ in $G$, but the addition of any edge in $E(\overline G)$  to $G$ results in an $H$-subgraph. The saturation number sat$(n,H)$ is the minimum number of edges in an $H$-saturated graph on $n$ vertices. In this paper, we determine the saturation number sat$(n,P_7+tP_2)$ for $n\geq \frac {14}{5}t+27$ and characterize the extremal graphs for $n\geq \frac{14}{13}(3t+25)$.

{\bf Keywords}: saturation number; saturation graph; linear forest.
\end{abstract}

\section{Introduction}

Let $G$ be a graph with no loop and multiple edges, $V(G)$ and $E(G)$ be the vertex set and edge set of $G$, respectively. Let $|V(G)|$ (or $|G|$) be the order of $G$.
For any $x\in V(G)$, the set $\{y\in V(G)\setminus \{x\}:xy\in E(G)\}$ is the neighborhood of $x$ in $G$, denoted by $N_G(x)$. The minimum degree of $G$ is the parameter $\delta (G)=$ min $\{ d_G(x): x\in V(G)\}$, where $d_G(x)=|N_G(x)|$ is the degree of $x$ in $G$.
Let $N_G[x]=N(x)\cup \{v\}$, $\overline G$ be the complement of $G$, $C_n$ be a cycle of order $n$, $K_n$ be a complete graph of order $n$ and $P_k=x_1x_2\dots x_k$ be a path of $k$ vertices. The graph $G+H$ means the $disjoint\ union$ of $G$ and $H$. Specially, $tH$ denotes the $disjoint\ union$ of $t$ copies of $H$. Denoted by $V_i(G)$ the set of vertices with degree $i$ in $G$. The subscript $G$ in $N_G(x)$, $d_G(x)$ and $N_G[x]$ will be omitted if $G$ is clear from the context. Define $\alpha_k(G)=$ max $\{m:P_k+mP_2\subset G\}$.

A $fan$ $F_i$ consists of $i$ triangles sharing one vertex, say $x$. A $ffan$ $F^2_{i,j}$ is the graph $F_i+F_j+xy$, where $F_i$ and $F_j$ are two disjoint fans that all triangles of $F_i$ share one vertex $x$ and all triangles of $F_j$ share one vertex $y$, see a representation in Figure \ref{Fig.1}. A $\Delta_+ fan$ $\Delta_+ F_i$ consists of $i-1$ triangles and a $K_4$ sharing one vertex $x$, which is shown in Figure \ref{Fig.2}. A $\Delta fan$ $\Delta F_i$ obtained from $\Delta_+ F_i$ by deleting an edge which is adjacent to $x$ in $K_4$, see Figure \ref{Fig.3}. Moreover, we denote $x$ as a center vertex in each of $F_i$, $F^2_{i,j}$, $\Delta_+ fan$ and $\Delta fan$. A graph $G$ is $book$-$structural$ if there exist $u$, $v\in V(G)$ such that $d(u)=d(v)=2$ and $N(u)=N(v)$. An example is shown in Figure \ref{Fig.4}.
\begin{figure}[htbp]
  \centering
 \begin{minipage}[t]{0.36\textwidth}
    \centering
    \includegraphics[width=\textwidth]{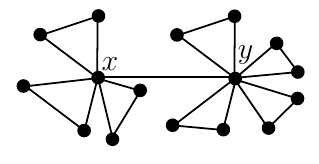} 
    \caption{$F^2_{3,4}$}\label{Fig.1}
  \end{minipage}
\hfill
\centering
 \begin{minipage}[t]{0.22\textwidth}
    \centering
    \includegraphics[width=\textwidth]{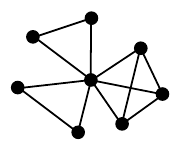} 
     \caption{$\Delta_+ F_3$}\label{Fig.2}
  \end{minipage}
  \hfill
\centering
\begin{minipage}[t]{0.22\textwidth}
    \centering
    \includegraphics[width=\textwidth]{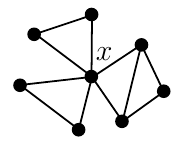} 
     \caption{$\Delta F_3$}\label{Fig.3}
  \end{minipage}
  \hfill
\end{figure}
\begin{figure}[htbp]
\begin{center}
\scalebox{0.65}[0.65]{\includegraphics{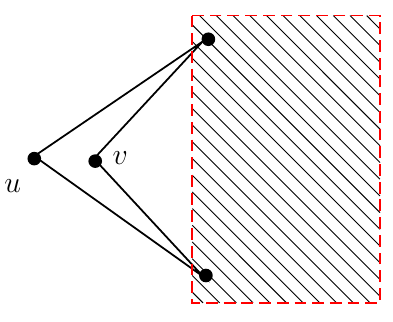}}
\caption{An example of a graph with book-structural}\label{Fig.4}
\end{center}
\end{figure}

Let $H$ be a graph. For any graph $G$, we say that $G$ is $H$-$saturated$ if there exists no copy of $H$ in $G$ but for any $e\in E(\overline G)$, $G+e$ contains a copy of $H$. For $n\geq |V(H)|$, let $sat(n,H)=\min\{|E(G)|:G\ is\ H$-$saturated\ and\ V(G)=n\}$ denote the $saturation\ number$ of $H$, and $Sat(n,H)$ denote the set of $H$-$saturated$ graphs with size $sat(n,H)$.

The result about the saturation number of a graph was introduced by Erd\H{o}s, Hajnal, Moon in \cite{6} in which the authors proved $sat(n,K_t)=\binom {t-2}{2}+(n-t+2)(t-2)$ and $K_{t-2}\vee \overline K_{n-t+2}$ is the unique minimum $K_t$-saturated graph. Most of the results on the saturation numbers of forests concern $linear$ forests. The result of this type concerned a forest consisting of $t$ independent edges that L. K\'{a}szonyi et al. in \cite{1} proved that for $n \geq 3t-3$, sat$(n,tP_2)=3t-3$ and  $Sat(4,2P_2)=\{K_3+K_1,S_4\}$, otherwise, $Sat(n,tP_2)=\{(t-1)K_3+\overline{K}_{n-3t+3}\}$.
There exist a number of recent results on the saturation numbers of specific linear forests in \cite{7,8,9}. Chen et al. in \cite{2} proved that for $n$ sufficiently large, $sat(n,P_3+tP_2)=3t$ and $tK_3+\overline{K}_{n-3t}\in Sat(n,P_3+tP_2)$. Moreover, $sat(n,P_4+tP_2)=3t+7$ and $K_5+(t-1)K_3+\overline{K}_{n-3t-2}\in Sat(n,P_4+tP_2)$. And they also obtained that $\lfloor \frac{n}{2} \rfloor \leq sat(n,tP_3)\leq \lfloor \frac{n}{2} \rfloor+3(t-1)$. He et al. in \cite{10} improved the lower bound on $sat(n,tP_3)$ that $sat(n,tP_3)\geq \lfloor \frac{n}{2} \rfloor+\frac{5}{2}(t-1)$.
 Fan et al. in \cite{3} proved that $sat(n,P_5+tP_2)=$ min$ \{\lceil \frac{5n-4}{6} \rceil,3t+12\}$ and $Sat(n,P_5+tP_2)=\{K_6+(t-1)K_3+\overline{K}_{(n-3t-3)}\}$ for $n\geq \frac{1}{5}(18t+76)$. Yan et al. in \cite{4} proved that $sat(n,P_6+tP_2)=$ min$ \{n-\lfloor \frac{n}{10} \rfloor,3t+18\}$ and $Sat(n,P_6+tP_2)=\{K_7+(t-1)K_3+\overline{K}_{(n-3t-4)}\}$ for $n\geq \frac{10}{3}t+20$.

Our main result proves that the saturation number of $P_7+tP_2$ and characterizes the minimum $(P_7+tP_2)$-saturated graphs, shown in Theorem \ref{thm9}.

\begin{thm}\label{thm9}
Let $n$ and $t$ be two positive integers with $n\geq \frac {14}{5}t+27$. Then

  \item (1) $sat(n,P_7+tP_2)=min\{3t+25,n-\lfloor \frac{n}{14} \rfloor\}$,
  \item (2) $Sat(n,P_7+tP_2)=K_8+cK_3+F+\overline{K}_{n-3t_3-|F|-8}$ for $n\geq \frac{14}{13}(3t+25)$, where $c$ is a non-negative integer, $F$ is the disjoint union of fans with order at least $7$ and $\alpha^{\prime}(cK_3+F)=t-1$.

\end{thm}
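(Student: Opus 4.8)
The plan is to establish the two quantities in part~(1) as matching upper and lower bounds for $sat(n,P_7+tP_2)$, and then, in the range $n\ge\frac{14}{13}(3t+25)$ where $3t+25\le n-\lfloor n/14\rfloor$, to upgrade the lower-bound argument to a structural classification giving part~(2). The natural dividing line throughout is whether a $(P_7+tP_2)$-saturated graph $G$ contains a copy of $P_7$: the forest value $n-\lfloor n/14\rfloor$ governs the $P_7$-free regime, while the clique value $3t+25$ governs the regime where $G$ does contain $P_7$.

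For the upper bound I would check that $G_0=K_8+cK_3+F+\overline{K}$, with $F$ a disjoint union of fans of order $\ge 7$ and $\alpha'(cK_3+F)=t-1$, is $(P_7+tP_2)$-saturated. There are two things to verify. First, $G_0$ is $(P_7+tP_2)$-free: any $P_7$ must lie in a single component, and since a fan $F_i$ has longest path $P_5$, the only component admitting $P_7$ is $K_8$; placing $P_7$ inside $K_8$ leaves a single isolated vertex of $K_8$, so the largest disjoint matching outside equals $\alpha'(cK_3+F)=t-1$, giving only $P_7+(t-1)P_2$. Second, adding any non-edge $e$ creates $P_7+tP_2$: I would run through the cases (two $\overline{K}$-vertices, $\overline{K}$-vertex to $K_8$, $\overline{K}$-vertex to a fan or triangle, two distinct gadgets), showing $e$ either lengthens a fan so that the enlarged fan now contains a $P_7$ — this is exactly why fans are required to have order $\ge 7$, whereas $F_2$ of order $5$ would fail to be saturated — or raises the available matching by one. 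A tree on $14$ vertices per component shows $28+3(t-1)=3t+25$ edges, while the Kászonyi--Tuza $P_7$-saturated forest from \cite{1}, which is $P_7$-free and, once $n$ is large, carries a matching of size $\ge t$, realizes $n-\lfloor n/14\rfloor$.

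For the lower bound, suppose first that $G$ contains no $P_7$. Since $G$ is $(P_7+tP_2)$-saturated, every $G+e$ contains $P_7+tP_2$ and hence contains $P_7$; as $G$ itself has no $P_7$, the graph $G$ is $P_7$-saturated, so $|E(G)|\ge sat(n,P_7)=n-\lfloor n/14\rfloor$ by the path-saturation formula of \cite{1}. This already dominates $\min\{3t+25,n-\lfloor n/14\rfloor\}$ in this case.

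The substance of the argument, and the step I expect to be the main obstacle, is the case $G\supseteq P_7$, where $1\le\alpha_7(G)\le t-1$. I would begin by proving $\alpha_7(G)=t-1$: for a non-edge $e$, a copy of $P_7+tP_2$ in $G+e$ uses $e$ in one of the $P_2$'s or inside the $P_7$; in the former case deleting that $P_2$ exhibits $P_7+(t-1)P_2$ in $G$, and the delicate point is to rule out, or separately handle, the case where every such copy forces $e$ onto the $P_7$. Fixing a copy of $P_7+(t-1)P_2$, I would then examine the neighborhoods of the leftover vertices and exploit saturation to force simultaneously a dense ``$P_7$-blocking'' core carrying $28$ edges and $t-1$ vertex-disjoint matching gadgets, each spending exactly three edges per unit of matching (triangles, the fans $F_i$, and the $F^2_{i,j}$, $\Delta_+F_i$, $\Delta F_i$, and book-structural configurations of the preliminaries are precisely the candidate gadgets), yielding $|E(G)|\ge 28+3(t-1)=3t+25$. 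The heart of the difficulty is this structural case analysis: showing that no configuration improves on the cost of $K_8$ for blocking $P_7$, and that every additional unit of matching genuinely costs three edges. Finally, for $n\ge\frac{14}{13}(3t+25)$ I would rerun the analysis with equality forced at each inequality, which pins the core to $K_8$ and the gadget part to $cK_3+F$ with $\alpha'(cK_3+F)=t-1$; the exclusion of all fans of order below $7$ (apart from $K_3$, which has no internal non-edge) is exactly the saturation obstruction already seen in the upper-bound verification, and this gives the claimed extremal family.
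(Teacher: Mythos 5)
Your Case 1 is correct and is in fact cleaner than the paper's treatment of the corresponding graphs: if $G$ is $(P_7+tP_2)$-saturated and $P_7$-free, then every edge addition creates a $P_7$, so $G$ is $P_7$-saturated and $|E(G)|\geq sat(n,P_7)=n-\lfloor n/14\rfloor$ by K\'{a}szonyi--Tuza. The genuine gap is your Case 2. You claim that \emph{every} saturated graph containing $P_7$ has at least $3t+25$ edges, but this is strictly stronger than anything the paper proves, and it is exactly the statement your dichotomy forces you to prove. The paper's trichotomy is on the number of isolated vertices: it proves the $3t+25$ bound only when $|V_0(G)|\geq 2$ (Theorem~\ref{thm1}), and for saturated graphs with $|V_0(G)|\leq 1$ --- which may well contain $P_7$ and need not have any isolated vertex --- it proves only the weaker bound $|E(G)|\geq n-\lfloor n/14\rfloor$ (degree count, and the tree-component count of Lemma~\ref{lem8}). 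In the intermediate regime $\frac{14}{5}t+27\leq n<\frac{14}{13}(3t+25)$ that weaker bound is roughly $\frac{13}{14}n\approx 2.6t+25$, strictly below $3t+25$, yet still equal to the minimum in the theorem; so the paper never needs, and never establishes, your Case 2 claim. A saturated graph containing $P_7$, with no isolated vertices (say with some tree components), and with edge count strictly between $n-\lfloor n/14\rfloor$ and $3t+25$ is not excluded by anything you or the paper prove; your route stands or falls on ruling this out, and you give no mechanism for doing so.

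The reason you cannot simply ``rerun the structural analysis'' in Case 2 is that all of the paper's machinery behind the $3t+25$ bound is powered by isolated vertices: Lemma~\ref{lem4} (adding $xw$ with $w\in V_0(G)$ traps $N[x]\cup\{w\}$ inside the new copy of $P_7+tP_2$), Lemma~\ref{lem15} (small components are cliques), and Lemma~\ref{lem6} all assume $|V_0(G)|\geq 2$, and these are precisely the tools that force the clique/fan gadget structure and the ``three edges per matching unit'' accounting you invoke. Without an isolated vertex, the saturation hypothesis gives far weaker leverage, and your sketch (``force a dense core carrying $28$ edges and $t-1$ gadgets each costing three edges'') is a restatement of the desired conclusion rather than an argument; the same gap propagates into your part (2), which you obtain by ``forcing equality'' in this unproven analysis. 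A smaller but real imprecision on the upper-bound side: for the forest construction, ``$P_7$-free with matching $\geq t$'' does not imply $(P_7+tP_2)$-saturation, because the $P_7$ created by an added edge consumes up to five matching edges; one needs a matching of size at least $t$ surviving \emph{outside} the new $P_7$ (the paper's condition $\alpha_7(H+xy)=5q-5\geq t$), and this is exactly where the hypothesis $n\geq\frac{14}{5}t+27$ enters.
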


\section{Preliminaries and observation}

\begin{lem}\label{lem1}\cite{5} For a graph $G$,

\begin{center}
$\alpha^{\prime}(G)=\frac{1}{2}min\{|G|+|S|-o(G-S):S\subset V(G)\}$,
\end{center}
where $\alpha^{\prime}(G)$ is the matching number of $G$ and $o(G-S)$ is the number of odd components of $G-S$.

\end{lem}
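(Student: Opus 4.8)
The plan is to prove the equivalent \emph{deficiency} form of the identity. Writing $\mathrm{def}(G) = |V(G)| - 2\alpha'(G)$ for the number of vertices left uncovered by a maximum matching, and $d = \max\{o(G-S) - |S| : S \subseteq V(G)\}$, the claimed formula $\alpha'(G) = \frac{1}{2}\min\{|V(G)| + |S| - o(G-S) : S \subseteq V(G)\}$ is exactly the statement $\mathrm{def}(G) = d$. I would establish the two inequalities $\mathrm{def}(G) \geq d$ and $\mathrm{def}(G) \leq d$ separately.

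For the lower bound $\mathrm{def}(G) \geq d$ (the easy, counting direction), fix any $S \subseteq V(G)$ and any matching $M$. Every odd component $C$ of $G-S$ contains an odd number of vertices, so $M$ cannot match $C$ perfectly within itself; at least one vertex of $C$ is either unmatched or matched by an edge leaving $C$, which must land in $S$. Since distinct such edges use distinct vertices of $S$, at most $|S|$ of the odd components send a matching edge into $S$, so at least $o(G-S) - |S|$ odd components each contribute an uncovered vertex. Hence $M$ leaves at least $o(G-S) - |S|$ vertices uncovered; taking $M$ maximum and then maximizing over $S$ gives $\mathrm{def}(G) \geq d$.

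For the upper bound $\mathrm{def}(G) \leq d$ (the substantive direction) I would reduce to Tutte's $1$-factor theorem via a gadget. First note $d \equiv |V(G)| \pmod 2$, since for every $S$ one has $o(G-S) \equiv |V(G)| - |S| \pmod 2$, whence $o(G-S) - |S| \equiv |V(G)| \pmod 2$. Form $G'$ from $G$ by adding a set $U$ of $d$ new vertices, each adjacent to every vertex of $G$ and with $U$ independent; then $|V(G')| = |V(G)| + d$ is even. I would verify Tutte's condition $o(G' - S') \leq |S'|$ for every $S' \subseteq V(G')$ by cases: if $S'$ omits some $u \in U$, then $u$ is adjacent to all surviving original vertices, so $G'-S'$ has at most one nontrivial component, with any remaining isolated vertices of $U$ controlled using $d \leq |V(G)|$ (which follows by taking $S = \emptyset$); if instead $U \subseteq S'$, writing $S' = U \cup S$ with $S \subseteq V(G)$ gives $o(G'-S') = o(G-S) \leq |S| + d = |S'|$ directly from the definition of $d$. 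By Tutte's theorem $G'$ has a perfect matching $M'$; since $U$ is independent, each of the $d$ vertices of $U$ is matched to a distinct original vertex, so deleting $U$ from $M'$ yields a matching of $G$ missing exactly $d$ vertices. Thus $\mathrm{def}(G) \leq d$, which together with the lower bound completes the proof.

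The main obstacle is the upper bound, and in particular the case analysis confirming Tutte's condition in $G'$; the parity fact $d \equiv |V(G)| \pmod 2$ and the bound $d \leq |V(G)|$ are exactly the inputs that make the isolated-vertex subcase go through. If Tutte's theorem is not taken as given, the alternative is a direct augmenting-path argument: starting from a maximum matching, one builds the Gallai--Edmonds structure from the exposed vertices, and its barrier set $S$ of outer vertices satisfies $o(G-S) - |S| = \mathrm{def}(G)$; this is more work but avoids citing Tutte.
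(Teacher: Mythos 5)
Your argument is correct in substance, but there is nothing in the paper to compare it against: the paper does not prove this lemma at all, quoting it from Berge \cite{5} as a known result (the Berge--Tutte deficiency formula). Your route is the classical derivation from Tutte's $1$-factor theorem: the counting direction $\mathrm{def}(G)\ge d$ is exactly right (each odd component of $G-S$ either exposes a vertex or sends a matching edge into $S$, and those edges hit distinct vertices of $S$), and the gadget for $\mathrm{def}(G)\le d$ --- adjoining an independent set $U$ of $d$ vertices joined to all of $V(G)$, using the parity fact $d\equiv |V(G)| \pmod 2$ to make $|V(G')|$ even, then verifying Tutte's condition by the two cases $U\not\subseteq S'$ and $U\subseteq S'$ --- is the standard and correct reduction; deleting $U$ from the resulting perfect matching leaves a matching of $G$ exposing exactly $d$ vertices. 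One small repair is needed: your parenthetical claim that $d\le |V(G)|$ ``follows by taking $S=\emptyset$'' is backwards, since evaluating the maximum at $S=\emptyset$ only gives the lower bound $d\ge o(G)\ge 0$. The correct (and equally trivial) justification is that every odd component of $G-S$ contains at least one vertex, so $o(G-S)-|S|\le \bigl(|V(G)|-|S|\bigr)-|S|\le |V(G)|$ for every $S$, whence $d\le |V(G)|$; this is what the isolated-vertex subcase (when $V(G)\subseteq S'$ and the survivors are isolated vertices of $U$) actually requires. With that one-line fix your proof is complete, and it is a perfectly reasonable self-contained substitute for the paper's citation, taking Tutte's theorem as the black box; your closing remark that a Gallai--Edmonds style augmenting-path argument would avoid even that citation is also accurate, at the cost of considerably more machinery.
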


\begin{lem}\label{lem2}\cite{2} Let $k_1,\dots ,k_m\geq 2$ be $m$ integers and $G$ be a $(P_{k_1}+P_{k_2}+\dots +P_{k_m})$-saturated graph. If $d(x)=2$ and $N(x)=\{u,v\}$, then $uv\in E(G)$.

\end{lem}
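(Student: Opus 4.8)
The plan is to argue by contradiction, exploiting the saturation hypothesis together with the degree-two vertex $x$ as a device for rerouting a path. Suppose $uv\notin E(G)$; since $u$ and $v$ are two distinct neighbours of $x$, the pair $uv$ is a genuine non-edge, i.e.\ $uv\in E(\overline G)$. Write $H:=P_{k_1}+\dots+P_{k_m}$. As $G$ is $H$-saturated, $G$ contains no copy of $H$ but $G+uv$ does; fix one such copy, realised as vertex-disjoint paths $Q_1,\dots,Q_m$ with $Q_\ell\cong P_{k_\ell}$. Because $G$ is $H$-free, this copy must use the new edge $uv$, and since $u$ and $v$ each occur exactly once in a disjoint union of paths, $uv$ is used exactly once, lying on a single path $Q_i$.

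First I would treat the easy case $x\notin\bigcup_\ell V(Q_\ell)$. Then $Q_i$ has the form $\dots\,u\,v\,\dots$, and I replace the edge $uv$ by the two edges $ux,xv\in E(G)$. This subdivision enlarges $Q_i$ to a path on $k_i+1$ vertices that remains disjoint from the other $Q_\ell$ (since $x$ was unused), and such a path contains $P_{k_i}$. Together with the untouched $Q_\ell$, $\ell\neq i$, this gives a copy of $H$ lying entirely in $G$, contradicting $H$-freeness.

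The main case is $x\in\bigcup_\ell V(Q_\ell)$, where I would first exclude $x$ being an interior vertex of its path. If it were, its two path-neighbours would both lie in $N(x)=\{u,v\}$ and hence equal $u$ and $v$, so its path would contain the segment $u\,x\,v$; as this path also contains $u$, while $uv$ lies on $Q_i$, disjointness forces the path to be $Q_i$, and then $u,v,x$ carry all three edges $ux,xv,uv$, a triangle inside a path --- impossible. So $x$ is an endpoint, and its unique path-neighbour is $u$ or $v$; by the $u\leftrightarrow v$ symmetry, say $u$. Since $u$ lies on both $x$'s path and $Q_i$, disjointness again identifies them, whence $Q_i=x\,u\,\cdots$; and as $uv\in E(Q_i)$ makes $v$ a path-neighbour of $u$, the vertex after $u$ must be $v$, so $Q_i=x\,u\,v\,\cdots$. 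Rerouting to $u\,x\,v\,\cdots$ via $ux,xv\in E(G)$ (the tail, possibly empty, is left unchanged) produces a copy of $P_{k_i}$ on the same vertex set, hence a copy of $H$ in $G$ --- the final contradiction, so $uv\in E(G)$.

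I expect the only delicate points to be structural bookkeeping: verifying that the edge $uv$ is used at most once (so that rerouting it genuinely destroys the copy) and that each modification keeps the paths vertex-disjoint. The degree-two condition enters decisively only in the triangle exclusion, where $N(x)=\{u,v\}$ pins down both neighbours of an interior $x$. No appeal to Lemma \ref{lem1} is required; the argument is a direct unfolding of the saturation definition.
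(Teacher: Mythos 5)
The paper itself offers no proof of this lemma: it is imported verbatim from Chen et al.\ \cite{2}, so there is no internal argument to compare yours against. Your proof is correct and complete: the copy of $H$ in $G+uv$ must use the new edge $uv$, and in each of your three situations ($x$ outside the copy, $x$ an interior vertex, $x$ an endpoint of $Q_i$) rerouting through the edges $ux,xv\in E(G)$ yields a copy of $H$ inside $G$, contradicting that $G$ is $H$-free; the degree-two hypothesis is used exactly where it must be, to pin down the path-neighbours of $x$ (note $N_{G+uv}(x)=\{u,v\}$ still, since the added edge avoids $x$). This saturation-and-reroute argument is the standard one and is, in essence, the proof in the cited source.
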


\begin{lem}\label{lem4}\cite{4} Let $G$ be a $(P_k+tP_2)$-saturated graph with $k\geq 2, t\geq 1$. If $V_0(G)\neq \emptyset$, then $V_1(G)= \emptyset$. Moreover, for any $x\in V(G)\backslash V_0(G)$, we have

\begin{center}
$N_G[x]\cup \{w\}\subset V(H)$,
\end{center}
where $H$ is any copy of $(P_k+tP_2)$ in $G+xw$ and $w$ is a vertex in $V_0(G)$.

\end{lem}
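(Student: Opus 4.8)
My plan is to build the argument on two elementary observations about a $(P_k+tP_2)$-saturated graph $G$. First, since $G$ contains no $P_k+tP_2$ but $G+e$ does for every $e\in E(\overline G)$, every copy of $P_k+tP_2$ in $G+e$ must actually \emph{use} the new edge $e$; otherwise it would already sit inside $G$. Second, in any copy $H\cong P_k+tP_2$ each vertex has degree $1$ or $2$: the degree-$2$ vertices are precisely the interior vertices of the path $P_k$ (so such a vertex is adjacent in $H$ to exactly its two path-neighbours), while the degree-$1$ vertices are the two endpoints of $P_k$ together with the $2t$ endpoints of the $P_2$'s, and no vertex of $H$ has degree $\geq 3$. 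With these in hand, I would prove the displayed containment (the ``moreover'' part) first, and then deduce $V_1(G)=\emptyset$ from it.

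For the ``moreover'' part, fix $x\in V(G)\setminus V_0(G)$ and an isolated vertex $w\in V_0(G)$. Since $w$ is isolated, $xw\in E(\overline G)$, so $G+xw$ contains a copy $H$ of $P_k+tP_2$, which by the first observation uses $xw$; hence $x,w\in V(H)$, and because $\deg_{G+xw}(w)=1$ the vertex $w$ is a leaf of $H$ whose only incident $H$-edge is $xw$. Suppose, for contradiction, that some $u\in N_G(x)$ has $u\notin V(H)$. I would then form $H'$ from $H$ by deleting $w$ and putting $u$ in its place, that is, by replacing the edge $xw$ with $xu$. Since $xu\in E(G)$, since $u\notin V(H)$, and since the only $H$-edge at $w$ was $xw$, the graph $H'$ is again a copy of $P_k+tP_2$ and all of its edges lie in $G$, contradicting $(P_k+tP_2)$-freeness of $G$. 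Hence $N_G(x)\subseteq V(H)$, and together with $x,w\in V(H)$ this yields $N_G[x]\cup\{w\}\subseteq V(H)$.

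For the first assertion I would argue by contradiction, assuming $V_0(G)\neq\emptyset$ and $V_1(G)\neq\emptyset$; pick $w\in V_0(G)$ and $y\in V_1(G)$ with $N_G(y)=\{z\}$. As $d_G(z)\geq 1$ we have $z\notin V_0(G)$, so the ``moreover'' part applies to the pair $(z,w)$: the copy $H$ of $P_k+tP_2$ in $G+zw$ satisfies $N_G[z]\cup\{w\}\subseteq V(H)$, and since $y\in N_G(z)$ we get $y\in V(H)$. Now $H$ uses the added edge, so $zw\in E(H)$; moreover adding $zw$ does not change $N(y)=\{z\}$, so the unique $H$-edge at $y$ is $yz$, whence $yz\in E(H)$. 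Thus $z$ is adjacent in $H$ to the two leaves $w$ and $y$, so $\deg_H(z)\geq 2$; by the second observation $z$ is then an interior vertex of $P_k$ whose two path-neighbours are the leaves $w$ and $y$, forcing the path to be exactly $wzy$ and hence $k=3$. For $k\geq 4$ (in particular for the case $k=7$ treated in this paper) this is already a contradiction, giving $V_1(G)=\emptyset$; the degenerate value $k=2$ is equally immediate, since there $z$ would have to lie in two distinct $P_2$-components $zw$ and $zy$.

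The genuinely delicate point, and the main obstacle, is the borderline value $k=3$, where the argument only pins $H$ down to $(wzy)+tP_2$ without an outright contradiction; closing it needs a short supplementary step (for instance, locating a second neighbour of $z$, or an unused $P_2$, and reassembling $P_3+tP_2$ inside $G$), and it is not required for $P_7+tP_2$. The only remaining care is bookkeeping: in the replacement $w\mapsto u$ one must check that $u$ is genuinely a new vertex (using $u\notin V(H)$) and that the added edge $xw$ is never reintroduced, so that $H'$ really is a copy of $P_k+tP_2$ lying entirely in $G$.
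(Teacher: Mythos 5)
The paper offers no proof of this lemma to compare against: it is imported verbatim, with citation, from the $P_6+tP_2$ paper of Yan and Zhen \cite{4}. Judged on its own terms, your argument for the ``moreover'' part is correct and complete for all $k\geq 2$, $t\geq 1$: every copy $H$ of $P_k+tP_2$ in $G+xw$ must use the new edge $xw$ (else it would lie in $G$); since $w$ is isolated in $G$, its unique edge in $G+xw$ is $xw$, so $w$ is a leaf of $H$ attached at $x$, and if some $u\in N_G(x)$ avoided $V(H)$, the exchange replacing $xw$ by $xu$ would produce a copy of $P_k+tP_2$ entirely inside $G$, a contradiction. The bookkeeping you flag all checks out: $u\neq w$ because $w$ is isolated while $u\in N_G(x)$, the only non-$G$ edge of $G+xw$ is the deleted edge $xw$, and replacing a leaf by a fresh vertex preserves the isomorphism type. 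Your deduction of $V_1(G)=\emptyset$ from this is likewise sound for $k\neq 3$: with $y\in V_1(G)$, $N_G(y)=\{z\}$, $w\in V_0(G)$, the containment $N_G[z]\cup\{w\}\subseteq V(H)$ forces $yz,zw\in E(H)$, so $z$ is an interior path vertex whose two path-neighbours are both leaves, which pins down $k=3$ and the path $wzy$; for $k=2$ the degree count kills it outright.

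The gap at $k=3$ is, however, genuine relative to the statement as written, which asserts the conclusion for all $k\geq 2$, and your parenthetical repair sketch does not close it as easily as suggested. If $z$ has a second neighbour $v\neq y$, then $N_G[z]\subseteq V(H)$ forces $v$ to lie in one of the $t$ copies of $P_2$ in $H$, say $uv$; promoting $yz$ to the path $yzv$ destroys that copy and leaves only $P_3+(t-1)P_2$ in $G$, with the dangling endpoint $u$ of the lost $P_2$ giving no immediate replacement --- so a further idea is needed (it is no accident that $P_3+tP_2$ receives a separate treatment in \cite{2}). If instead $N_G(z)=\{y\}$, so $\{y,z\}$ is a $K_2$-component, your machinery only shows that $G$ contains $(t+1)P_2$ but no $P_3+tP_2$, again with no outright contradiction. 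That said, the defect is harmless for the present paper: Lemma \ref{lem4} is invoked here only with $k=7$ (Lemma \ref{lem6}, Theorem \ref{thm1}) and $k\geq 6$ (Lemma \ref{lem10}), parameter ranges where your proof is complete. To stand as a proof of the lemma in its stated generality, you would need to supply the missing $k=3$ argument or restrict the hypothesis to $k\geq 4$.
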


\begin{obs}\label{lem2.4} Let $G$ be a $(P_k+tP_2)$-saturated graph with $k\geq 2, t\geq 1$. Let $Q$ be a component of $G$. Then

  \item (1) $\alpha^{\prime}(Q)< \alpha^{\prime}(Q+uv)$, or $P_k\subset Q$ and $\alpha_k(Q)<\alpha_k(Q+uv)$, or $P_k\not\subset Q$ and $P_k\subset Q+uv$ for any $u,v\in Q$ and $uv\notin E(G)$;
  \item (2) $\alpha^{\prime}(Q+Q_1)< \alpha^{\prime}(Q+Q_1+uv)$, or $P_k\subset Q+Q_1$ and $\alpha_k(Q+Q_1)<\alpha_k(Q+Q_1+uv)$, or $P_k\not\subset Q+Q_1$ and $P_k\subset Q+Q_1+uv$ for any $u\in Q$ and $v\in Q_1$, where $Q_1$ is a component of $G$ with $Q\neq Q_1$.

\end{obs}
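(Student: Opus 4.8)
The plan is to exploit saturation directly: since $G$ contains no $P_k+tP_2$ while $G+uv$ does, any copy $H\cong P_k+tP_2$ in $G+uv$ must use the new edge $uv$, for otherwise $H$ would already sit inside $G$. Fix such a copy $H$. For part (1), where $u,v\in Q$, adding $uv$ leaves the component structure of $G$ unchanged except that $Q$ becomes $Q+uv$, while every other component is untouched. I would then decompose $H$ according to the components of $G+uv$, writing $H_Q$ for the portion of $H$ lying in $Q+uv$ and $H'$ for the remainder, which is a subgraph of the unchanged components and hence already a subgraph of $G$. Since the $P_k$ and each $P_2$ of $H$ each lie entirely in a single component, the edge $uv$ belongs either to the $P_k$ or to one of the $P_2$'s, and this is the natural case split.

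In the first case the $P_k$ of $H$ lies outside $Q+uv$, so $uv$ is one of the matching edges and $H_Q=sP_2$ for some $s\ge 1$, giving a matching of size $s$ in $Q+uv$, i.e. $\alpha^{\prime}(Q+uv)\ge s$. The key step is the contradiction: if also $\alpha^{\prime}(Q)\ge s$, then $sP_2\subset Q$, and combining this matching with $H'=P_k+(t-s)P_2$, which lives in components disjoint from $Q$, would produce $P_k+tP_2\subset G$, contradicting saturation; hence $\alpha^{\prime}(Q)<\alpha^{\prime}(Q+uv)$, the first alternative. In the second case $P_k\subset Q+uv$ and $H_Q=P_k+sP_2$. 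If $P_k\not\subset Q$ we are immediately in the third alternative. Otherwise $P_k\subset Q$, and the same contradiction argument applied to $\alpha_k$ (if $\alpha_k(Q)\ge s$ then $P_k+sP_2\subset Q$, which combines with $H'=(t-s)P_2$ to give $P_k+tP_2\subset G$) forces $\alpha_k(Q)<\alpha_k(Q+uv)$, the second alternative.

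For part (2), where $u\in Q$ and $v\in Q_1$, adding $uv$ merges $Q$ and $Q_1$ into the single component $Q+Q_1+uv$ while leaving all other components fixed, so the argument is verbatim the same with $Q$ replaced throughout by $Q+Q_1$: decompose $H$, isolate the portion $H_*$ lying in $Q+Q_1+uv$, split on whether the $P_k$ of $H$ lies in this merged component, and in each branch derive the strict increase of $\alpha^{\prime}$ or $\alpha_k$ (or the creation of a new $P_k$) by the same ``otherwise $P_k+tP_2\subset G$'' contradiction.

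The step I expect to require the most care is the disjointness bookkeeping in the contradiction: when I enlarge a configuration found in $Q$ (or $Q+Q_1$) by the leftover $H'$, I must be sure these pieces are vertex-disjoint. This is guaranteed because $H'$ is drawn from components of $G$ other than $Q$ (resp. other than $Q$ and $Q_1$), but it is exactly the point where one must verify that $uv$ is genuinely essential and that deleting it from $H$ does not silently reuse vertices across the two parts; carefully tracking which of the $P_k$ or a $P_2$ contains $uv$ is what keeps this clean.
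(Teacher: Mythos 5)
Your proof is correct. The paper states this result as an Observation and provides no proof of its own, so your argument simply supplies the reasoning the authors leave implicit, and it is exactly the intended one: any copy $H$ of $P_k+tP_2$ in $G+uv$ must use $uv$; the pieces of $H$ lying in components other than the affected one are untouched subgraphs of $G$, so an equally large configuration (matching, or $P_k$ plus matching) inside the affected component would reassemble a copy of $P_k+tP_2$ in $G$ itself, contradicting that $G$ is $(P_k+tP_2)$-free. One small slip in your write-up: you announce the case split as ``$uv$ belongs to the $P_k$ or to one of the $P_2$'s,'' but the split you actually run (and need) is whether the $P_k$ of $H$ lies inside $Q+uv$ or not --- when $uv$ sits in a $P_2$, the $P_k$ may still lie inside $Q$; your second case as executed ($P_k\subset Q+uv$ with $H_Q=P_k+sP_2$, $s\ge 0$) absorbs that situation, so the case analysis is exhaustive and the argument stands.
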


\begin{lem}\label{lem15}
Let $G$ be a $(P_k+tP_2)$-saturated graph with $k\geq 2, t\geq 1$ and $|V_0(G)|\geq 2$. For any component $Q$ of $G$ with $|Q|\leq k-1$, then
\item (1) $Q$ is a clique;
  \item (2) $|Q|$ is odd or $|Q|=k-1$ with odd $k$.

\end{lem}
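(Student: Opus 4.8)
\emph{Plan.} Throughout write $\nu(Q)=\alpha^{\prime}(Q)$ and call $|Q|-2\alpha^{\prime}(Q)$ the \emph{deficiency} of $Q$; a vertex is \emph{exposed} by a matching if the matching does not cover it. Since $|Q|\le k-1<k$ we have $P_k\not\subseteq Q$ and, for any internal non-edge $uv$, also $P_k\not\subseteq Q+uv$. Hence in Observation~\ref{lem2.4}(1) only the first alternative can hold, so every internal non-edge forces $\alpha^{\prime}(Q)<\alpha^{\prime}(Q+uv)$, i.e. $\alpha^{\prime}(Q+uv)=\alpha^{\prime}(Q)+1$. Taking a maximum matching of $Q+uv$ (which must use the edge $uv$) and deleting $uv$ yields a maximum matching of $Q$ exposing both $u$ and $v$; consequently, \emph{if $Q$ is not a clique then its deficiency is at least $2$}. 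I also record that $V_1(G)=\emptyset$ by Lemma~\ref{lem4}, since $|V_0(G)|\ge 2>0$.

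The engine of the argument is the following exposability step, proved by adding an edge to an isolated vertex. Fix $x\in V(Q)$ and an isolated vertex $w\in V_0(G)$, and let $H\cong P_k+tP_2$ be a copy in $G+xw$; by Lemma~\ref{lem4}, $N_G[x]\cup\{w\}\subseteq V(H)$. As $w$ has degree $1$ in $G+xw$, its unique incident edge in $H$ is $xw$, so either $\{x,w\}$ is one of the $tP_2$'s, or $w$ is an endpoint of the $P_k$. In the latter case the path must read $w,x,\dots$ and then stay inside the component $Q$, forcing a Hamilton path of $Q$ starting at $x$ and hence $|Q|=k-1$. In the former case $R:=H\setminus\{x,w\}\cong P_k+(t-1)P_2$ lies in $G$, avoids $x$, and (because $P_k$ needs $k>|Q|$ vertices) has its $P_k$ disjoint from $Q$; the edges of $R$ inside $Q$ form a matching $M_x$ missing $x$. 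If $M_x$ were not maximum in $Q$, an augmenting path lying entirely inside $Q$ would enlarge it by one edge while staying disjoint from the external $P_k$ and the external $P_2$'s, producing $P_k+tP_2\subseteq G$, a contradiction. Thus $M_x$ is a maximum matching of $Q$ exposing $x$.

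For part (1), suppose $Q$ is not a clique, so its deficiency is at least $2$ by the first paragraph. A Hamilton path on $k-1$ vertices would give $\alpha^{\prime}(Q)\ge\lfloor (k-1)/2\rfloor$ and hence deficiency at most $1$; therefore the Hamilton-path alternative of the exposability step cannot occur for any vertex, and the $P_2$ alternative always applies. Consequently \emph{every} vertex of $Q$ is exposed by some maximum matching. Since $Q$ is connected, the Gallai--Edmonds structure theorem then forces $Q$ to be factor-critical, whence its deficiency equals exactly $1$ --- contradicting deficiency $\ge 2$. Hence $Q$ is a clique. The factor-critical deduction is the step I expect to be the crux, and the subtle point to verify carefully is that the augmenting rearrangements in the exposability step remain confined to $Q$ and disjoint from the external $P_k$.

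For part (2), we may now assume $Q=K_m$ with $m=|Q|\le k-1$. If $m\le k-2$, then adding $xw$ leaves no room for a Hamilton path through $w$, so the $P_2$ alternative holds and $R\cong P_k+(t-1)P_2\subseteq G$ avoids $x$ with $N_G(x)=V(Q)\setminus\{x\}\subseteq V(R)$; as the $P_k$ of $R$ avoids $Q$, the $m-1$ vertices of $N_G(x)$ are covered entirely by $P_2$'s of $R$ lying inside $Q$, forcing $m-1$ to be even and hence $m$ odd. If $m=k-1$, the stated conclusion holds automatically, since $m$ is either odd or else even with $k$ odd. This establishes both assertions.
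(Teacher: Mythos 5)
Your proof is correct, but it takes a genuinely different route from the paper's. For part (1), the paper works directly with the Berge--Tutte formula (Lemma~\ref{lem1}): it fixes a witness set $W\subset V(Q)$, uses Observation~\ref{lem2.4}(1) to show each $Q[W\cup V(Q'_i)]$ is complete, and then eliminates $W\neq\emptyset$ by joining a vertex of $W$ to an isolated vertex and appealing to Observation~\ref{lem2.4}(2), so that $Q$ itself is complete. You instead combine three ingredients: deficiency at least $2$ for a non-clique (edge-addition surgery on $Q+uv$), a maximum matching of $Q$ exposing any prescribed vertex $x$, extracted from a copy of $P_k+tP_2$ in $G+xw$ via Berge's augmenting-path theorem (and you correctly check the augmentation stays inside $Q$, since the $P_k$ and the remaining $P_2$'s of $R$ avoid $Q$), and Gallai's lemma to force factor-criticality of the connected graph $Q$, hence deficiency exactly $1$ --- a contradiction. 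For part (2), the paper again uses Observation~\ref{lem2.4}(2): an even clique of order at most $k-2$ has a perfect matching, so a pendant edge to an isolated vertex raises neither the matching number nor creates a $P_k$; you instead derive the parity from Lemma~\ref{lem4}, showing the $P_2$'s of $R=H\setminus\{x,w\}$ must perfectly match $V(Q)\setminus\{x\}$, so $|Q|-1$ is even. Both arguments are sound. What your route buys is robustness: the paper's step ``$|W|\geq 1$ implies $o(Q-W)\geq |W|+1$'' holds only for a suitably chosen minimizer of the Berge--Tutte expression (e.g.\ the Gallai--Edmonds set $A$), a point the paper glosses over, whereas your argument needs no such choice; the cost is importing classical machinery (Berge's theorem, Gallai's lemma) beyond the paper's stated Lemma~\ref{lem1}. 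One trivial housekeeping remark: your appeals to Lemma~\ref{lem4} tacitly assume $Q$ is nontrivial (so that $x\notin V_0(G)$); for $|Q|=1$ both conclusions of the lemma hold vacuously, so nothing is lost.
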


\begin{proof}

(1) By Lemma \ref{lem1}, there exists $W\subset V(Q)$ such that

\begin{center}
$\alpha^{\prime}(Q)=\frac{1}{2}(|Q|+|W|-o(Q-W))$.
\end{center}
Let $Q^{\prime}_1, Q^{\prime}_2, \dots , Q^{\prime}_p$ be the components of $Q-W$.

If there exist two vertices $u,v\in W\cup V(Q^{\prime}_i)$ such that $uv\notin E(Q)$, let $Q^{\prime}=Q+uv$.
Then
\begin{center}
$\alpha^{\prime}(Q^{\prime})\geq \alpha^{\prime}(Q)=\frac{1}{2}(|Q^{\prime}|+|W|-o(Q^{\prime}-W))\geq \alpha^{\prime}(Q^{\prime})$.
\end{center}

Therefore, $\alpha^{\prime}(Q^{\prime})=\alpha^{\prime}(Q)$ and $P_k\not \in Q^{\prime}$, a contradiction to Observation \ref{lem2.4} $(1)$. Hence, $Q[W\cup V(Q^{\prime}_i)]$ is a complete graph for $1\leq i\leq p$.

If $|W|\geq 1$, then $o(Q-W)\geq |W|+1$.
Let $x_1\in W$ and $x_2\in V_0(G)$.
Then $\alpha^{\prime}(Q)=\alpha^{\prime}(Q+x_1x_2)$ and $P_k\not\subset Q+x_1x_2$, a contradiction to Observation \ref{lem2.4} $(2)$.
It implies that $|W|=0$. Then $\alpha^{\prime}(Q)=\lfloor\frac{|Q|}{2}\rfloor$ and $Q$ is a clique by Observation \ref{lem2.4} $(1)$.

(2) To the contrary,
suppose that $|Q|$ is even and $|Q|\neq k-1$, which implies $|Q|\leq k-2$. Then $\alpha^{\prime}(Q+xy)=\alpha^{\prime}(Q)$ (as $|Q|$ is even) and $P_k\not \subset Q+xy$ (as $|Q|\leq k-2$), where $x\in Q$ and $y\in V_0(G)$, a contradiction to Observation \ref{lem2.4} $(2)$. Hence, $|Q|$ is odd or $|Q|=k-1$ with odd $k$.
\end{proof}

\begin{lem}\label{lem5}
Let $G$ be a connected graph of order $n\geq 7$ and $\delta(G)\geq 2$. If $G$ is $P_7$-free and $G$ contains $P_5$ as a subgraph, then $G$ is book-structural, or $G$ is isomorphic to each of $F_i,\ \Delta F_i$ or $\Delta_+ F_i$ for some $i\geq 3$, or $G$ is isomorphic to $F^2_{i,j}$ for some $i+j\geq 3$ and $i,j\geq 1$.
\end{lem}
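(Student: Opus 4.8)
The plan is to assume that $G$ is not book-structural and to prove that $G$ must be one of the remaining listed graphs; this suffices, since the conclusion is a disjunction. I would first take a longest path $L$ in $G$. Since $P_5\subseteq G$ and $G$ is $P_7$-free, $L$ has either $5$ or $6$ vertices, and the argument splits according to whether $G$ contains a $P_6$: if $G$ is $P_6$-free I will show $G\cong F_i$, while if $P_6\subseteq G$ I will show that $G$ is one of $\Delta_+F_i$, $\Delta F_i$ or $F^2_{i,j}$. The basic tool throughout is that the endpoints of a longest path have all their neighbours on the path, so an off-path neighbour of an endpoint immediately produces a longer path and hence a contradiction.

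Suppose first that $G$ is $P_6$-free and write $L=v_1v_2v_3v_4v_5$. A short argument gives $v_1v_5\notin E(G)$ (otherwise $v_1\cdots v_5v_1$ is a $C_5$, and connectivity with $n\geq 7$ forces an off-cycle neighbour, hence a $P_6$). I would then prove $v_1\sim v_3$ and, symmetrically, $v_5\sim v_3$: if $v_1\not\sim v_3$ then $N(v_1)=\{v_2,v_4\}$, and one checks $v_3$ can have no further neighbour (any such neighbour $u$ yields the $P_6$ $v_5v_4v_1v_2v_3u$), so $d(v_3)=2$ with $N(v_3)=\{v_2,v_4\}=N(v_1)$, making $\{v_1,v_3\}$ a book-structural pair. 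With $v_1,v_5\sim v_3$ in hand, the next step is to show $v_3$ is universal: no off-path vertex can be adjacent to $v_1,v_2,v_4,v_5$ (each such edge extends $L$ to a $P_6$, e.g. $u\sim v_1$ gives $uv_1v_2v_3v_4v_5$), and using $\delta(G)\geq 2$ together with the same $P_6$-avoidance one propagates outward from $v_3$ to conclude that every vertex is adjacent to $v_3$. Writing $G=\{v_3\}\vee H$ with $H=G-v_3$, we have $\delta(H)\geq 1$, and $H$ is $P_4$-free (a $P_4=h_1h_2h_3h_4$ in $H$ together with a fifth vertex $h_5\in H$ and the apex gives the $P_6$ $h_1h_2h_3h_4v_3h_5$). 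Finally, if some component of $H$ has at least three vertices then either a $P_6$ appears or two degree-$2$ vertices of $G$ become twins; hence every component of $H$ is a single edge, so $H=iK_2$ and $G=F_i$, with $i\geq 3$ forced by $n=2i+1\geq 7$.

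Now suppose $P_6\subseteq G$ and take a longest path $L=v_1v_2v_3v_4v_5v_6$. Here no off-path vertex is adjacent to $v_1$ or $v_6$, since $u\sim v_1$ gives the $P_7$ $uv_1\cdots v_6$. Mimicking the twin argument above, I would first establish the two end-triangles $v_1v_2v_3$ and $v_4v_5v_6$ together with the edge $v_3v_4$: whenever a required chord is absent, the relevant endpoint becomes a degree-$2$ vertex whose neighbourhood coincides with that of another degree-$2$ vertex, i.e. $G$ is book-structural. The vertices $v_3,v_4$ play the role of centres, and the key step is to show that every off-path vertex attaches to $\{v_3,v_4\}$ only as a triangle (any other attachment either reaches $v_1,v_2,v_5$ or $v_6$ and creates a $P_7$, or creates twins). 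It then remains to read off the structure from the graph induced on $\{v_3,v_4,v_5,v_6\}$ (and its mirror): since $v_3\sim v_4$ and $v_4v_5v_6$ is a triangle, the answer is governed by how many of $v_5,v_6$ are adjacent to $v_3$. Two such edges give $\{v_3,v_4,v_5,v_6\}=K_4$ and $G=\Delta_+F_i$; exactly one gives $K_4-e$ and $G=\Delta F_i$; none means $v_3$ and $v_4$ are independent centres joined only by $v_3v_4$, giving $G=F^2_{i,j}$. In each case the order count $n=2i+2$ or $n=2(i+j)+2\geq 8$ produces the stated index bounds.

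The main obstacle is the exhaustiveness of the case analysis when $P_6\subseteq G$: I must rule out every ``long'' or branching blade at the centres and show that the only local possibilities are a triangle, a $K_4$, or a $K_4-e$. The delicate point is that a blade which is a path of length two, or two blades sharing an extra vertex, need not create a $P_7$ outright; instead it produces two non-adjacent degree-$2$ vertices with the same neighbourhood. Recognising that every such configuration is precisely the book-structural escape hatch, and that nothing else can occur without forcing a $P_7$, is where the bulk of the work lies. Separating the genuinely two-centre case $F^2_{i,j}$ from the one-centre cases $\Delta_+F_i$ and $\Delta F_i$ requires carefully tracking the adjacencies of $v_3$ and $v_4$ to $v_5,v_6$ and to the off-path vertices, and this bookkeeping is the most error-prone part of the argument.
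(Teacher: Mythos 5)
Your proposal follows essentially the same route as the paper's proof: take a longest path (with $5$ or $6$ vertices, which is exactly your $P_6$-free versus $P_6\subseteq G$ split), show in the $5$-vertex case that all structure hangs off the middle vertex to give $F_i$, and in the $6$-vertex case classify chords and off-path attachments—with book-structural configurations as the escape hatch—to obtain $\Delta F_i$, $\Delta_+F_i$ or $F^2_{i,j}$. The only difference is organizational (the paper sorts the $6$-vertex case by $|N(x)\cap V(P)|\in\{\geq 3,\,2,\,1\}$ for off-path $x$ rather than by end-chords first, and some of your "absent chord" subcases are actually killed by a $P_7$ through a forced off-path vertex rather than by twins), and the case analysis you defer is precisely the analysis the paper carries out.
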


\begin{proof}

Select the longest path $P$ in $G$, say $P=x_1x_2\dots x_k$, then $N(x_1)\setminus V(P)=N(x_k)\setminus V(P)=\emptyset$. Since $G$ is $P_7$-free and $G$ contains $P_5$ as a subgraph, we have $k=5$ or $6$. Let $M$ be the set of vertices in $V(G)\setminus V(P)$ with at least one neighbor in $V(P)$.

$\mathbf {Case~1}$. $k=5$.

If there exists $x\in M$ such that $x_2\in N(x)$ or $x_4\in N(x)$, without loss of generality, assume that $x_2\in N(x)$, then $x_3\notin N(x)$ and $N(x)\subset V(P)$ as $P$ is the longest path in $G$. Hence $N(x)=\{x_2,x_4\}$ by $\delta (G)\geq 2$. Recall that $d(x_3)\geq 2$. If $d(x_3)=2$, then $G$ is book-structural. If $d(x_3)\geq 3$, let $v\in N(x_3)\setminus \{x_2,x_4\}$, then $vx_3x_2xx_4x_5$ or $vx_3x_4xx_2x_1$ is a copy of $P_6$ in $G$, a contradiction to $k=5$. Thus, if there exists $x\in M$ such that $x_2\in N(x)$ or $x_4\in N(x)$, then $G$ is book-structural.

Next, we can assume $N(x)\cap V(P)=\{x_3\}$ for each $x\in M$.
By $\delta(G)\geq 2$ and $P$ is the longest path of $G$, one has that there exists another vertex $y\in V(G)\setminus (V(P)\cup \{x\})$ and $N(y)=\{x,x_3\}$. Considering $P_5=yxx_3x_4x_5$, we have $N(x_1)=\{x_2,x_3\}$. Similarly, $N(x_5)=\{x_4,x_3\}$. Hence, $N(x)=\{x_3,y\}$, $N(x_2)=\{x_1,x_3\}$ and $N(x_4)=\{x_3,x_5\}$. Let $v\in N(x_3)\setminus V(P)$ and $u\in N(v)\setminus \{x_3\}$. Considering $P_5=uvx_3x_4x_5$, we have $N(u)=\{v,x_3\}$ and $N(v)=\{u,x_3\}$. Hence, $G$ is isomorphic to $F_i$, for some $i\geq 3$.

$\mathbf {Case~2}$. $k=6$.

Suppose that there exists $x\in M$, such that $|N(x)\cap V(P)|\geq 3$. Then there exists a copy of $P_7$ in $G$, a contradiction to $k=6$. Thus, we only consider the case of $|N(x)\cap V(P)|\leq 2$ for any $x\in M$.

Suppose that there exists $x\in M$ such that $|N(x)\cap V(P)|=2$. Then $\{x_2,x_4\}\subset N(x)$, or $\{x_3,x_5\}\subset N(x)$, or $\{x_2,x_5\}\subset N(x)$. As the discussions of $\{x_2,x_4\}\subset N(x)$ and $\{x_3,x_5\}\subset N(x)$ are similar, we only consider the former. Then $\{x_2,x_4\}=N(x)$. If $d(x_3)=2$, then $G$ is book-structural. If $d(x_3)\geq 3$ and there exists $v\in N(x_3)\setminus \{x_2,x_4,x_5,x_6\}$, then $vx_3x_2xx_4x_5x_6$ is a copy of $P_7$ in $G$, a contradiction. If $x_3x_5\in E(G)$, then $x_6x_5x_3x_4xx_2x_1$ is a copy of $P_7$ in $G$, a contradiction. If $x_3x_6\in E(G)$, then $x_5x_6x_3x_4xx_2x_1$ is a copy of $P_7$ in $G$, a contradiction. So if $\{x_2,x_4\}\subset N(x)$ or $\{x_3,x_5\}\subset N(x)$, then $G$ is book-structural. Assume that $\{x_2,x_5\}\subset N(x)$. Then $\{x_2,x_5\}=N(x)$. There exists a path $P_7$ in $G$ if $N(x_1)\cap\{x_3,x_4,x_6\}\neq \emptyset$. Hence, $N(x_1)=\{x_2,x_5\}$. Then $G$ is book-structural.

By the discussion above, we can assume that $|N(v)\cap V(P)|=1$ for any $v\in M$. Then $N(v)\cap V(P)=\{x_3\}$ or $N(v)\cap V(P)=\{x_4\}$. According to symmetry, we only consider that there exists $x\in M$ such that $N(x)\cap V(P)=\{x_3\}$, then there exists $y\in N(x)\setminus V(P)$ and $N(y)=\{x,x_3\}$. Considering $P_6=yxx_3x_4x_5x_6$, we have $N(x_1)=\{x_2,x_3\}$. Then $N(x_2)=\{x_1,x_3\}$ and $N(x)=\{x_3,y\}$. If there exists $w\in N(x_3)\setminus V(P)$ and $u\in N(w)\setminus \{x_3\}$. Considering $P_6=uwx_3x_4x_5x_6$, we have $N(u)=\{w,x_3\}$ and $N(w)=\{u,x_3\}$. If there exists $v_1\in N(x_4)\setminus V(P)$ and $u_1\in N(v_1)\setminus \{x_4\}$. Considering $P_6=x_1x_2x_3x_4v_1u_1$, we have $N(u_1)=\{v_1,x_4\}$ and $N(v_1)=\{u_1,x_4\}$. Similarly, $N(x_6)=\{x_4,x_5\}$ and $N(x_5)=\{x_4,x_6\}$. Hence, $G\cong F^2_{i,j}$ for some $i+j\geq 3$ . If $N(x_4)\setminus V(P)=\emptyset$, then $G[\{x_3,x_4,x_5,x_6\}]$ is a subgraph of $K_4$. Hence, $G$ is book-structural, or $G\cong \Delta F_i$ or $G\cong \Delta_+ F_i$ for some $i\geq 3$.
\end{proof}

\begin{lem}\label{lem10}

Let $G$ be a $(P_k+tP_2)$-saturated graph with $k\geq 6$ and $t\geq 1$. If $V_0(G)\neq \emptyset$, then $G$ is not book-structural.

\end{lem}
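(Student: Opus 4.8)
The plan is to argue by contradiction, and the decisive idea is to attach the isolated vertex to the \emph{common neighbor} of the two twins rather than to a twin itself. Suppose $G$ is book-structural, so there exist $u,v$ with $d(u)=d(v)=2$ and $N(u)=N(v)=\{a,b\}$; by Lemma \ref{lem2} we have $ab\in E(G)$, and since $a\sim u,v,b$ we have $d(a)\ge 3$, so $a\notin V_0(G)$. Fix an isolated vertex $w\in V_0(G)$; note $w\notin\{u,v,a,b\}$. I would add the edge $aw$ and invoke saturation to get a copy $H\cong P_k+tP_2$ in $G+aw$. Since $G$ is $(P_k+tP_2)$-free, $H$ must use the edge $aw$, and as $w$ had degree $0$ in $G$ it is a leaf of $H$ with $N_H(w)=\{a\}$.

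The point of attaching $w$ to $a$ is that Lemma \ref{lem4} with $x=a$ yields $N_G[a]\cup\{w\}\subseteq V(H)$, hence $\{u,v,a,b,w\}\subseteq V(H)$: both twins are dragged into the copy. Because $N_{G+aw}(u)=N_{G+aw}(v)=\{a,b\}$, in $H$ each of $u,v$ has all its neighbors inside $\{a,b\}$ and degree at least $1$. Now I would count at $a$: it already spends one edge on $w$ and has $d_H(a)\le 2$, so at most one further vertex is adjacent to $a$ in $H$, and in particular at most one twin can attach to $a$. A short case analysis — organised by whether $a$ has a second $H$-neighbor, whether that neighbor is a twin, and how $u,v$ distribute between $a$ and $b$ — forces $H$ to contain a component that is a short path: for instance $u\!-\!b\!-\!v$ (a $P_3$) when both twins hang off $b$, or $w\!-\!a\!-\!u$ (a $P_3$) when a twin is $a$'s only other neighbor, or $w\!-\!a\!-\!u\!-\!b\!-\!v$ (a $P_5$) in the remaining configuration. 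Since $k\ge 6$, none of these can be the $P_k$, and they are not $P_2$'s either, so each configuration contradicts $H\cong P_k+tP_2$. Hence $G$ is not book-structural.

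The only real work is the finite case analysis in the last step; the recurring mechanism is that the two degree-$2$ twins, having no room to both reach $a$, collapse onto $b$ and manufacture a forbidden odd short path. I expect the mild bookkeeping — checking that every configuration genuinely produces a $P_3$ or $P_5$ rather than a legitimate extension of the $P_k$ (in particular ruling out that a twin stuck at $b$ secretly has a second neighbor that rescues the length) — to be the main, though routine, obstacle. The hypothesis $k\ge 6$ is exactly what closes the argument, since it outlaws precisely the short components $P_3$ and $P_5$ that the counting forces.
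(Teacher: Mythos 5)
Your proposal is correct, and its skeleton --- attach the isolated vertex $w$ to a common neighbor $a$ of the two degree-$2$ twins, then analyze the forced copy $H$ of $P_k+tP_2$ in $G+aw$ --- is exactly the paper's. The difference lies in how the contradiction is closed. The paper uses Lemma \ref{lem4} only to conclude $aw\in E(H)$, then asserts (tersely, ``by $k\ge 6$'') that $\{u,v\}\not\subset V(H)$ --- which is precisely your case analysis --- and finishes with an edge swap: taking $u\notin V(H)$, the graph $H-aw+au$ is a copy of $P_k+tP_2$ lying entirely inside $G$, a contradiction. You instead invoke the full strength of Lemma \ref{lem4}, namely $N_G[a]\cup\{w\}\subseteq V(H)$, which drags both twins into $H$ and clashes head-on with the structural analysis, so no swap is needed. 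Your route is arguably cleaner: the paper's written proof in effect contains both halves (Lemma \ref{lem4} already forces $\{u,v\}\subseteq V(H)$, which makes its concluding swap redundant), whereas you use each ingredient exactly once. One small omission in your case analysis: if $a$'s second $H$-neighbor is $b$ itself, then both twins must attach to $b$, which has only one free slot, so one twin ends up with $H$-degree $0$ --- still a contradiction, but via a vertex isolated in $H$ rather than via a short-path component, so your blanket claim that every configuration ``produces a short path'' is not literally true in that sub-case. With that sub-case stated, your three configurations ($u\!-\!b\!-\!v$, $w\!-\!a\!-\!u$, $w\!-\!a\!-\!u\!-\!b\!-\!v$) exhaust the remaining possibilities, and each is indeed excluded by $k\ge 6$.
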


\begin{proof}
To the contrary, suppose that $G$ is book-structural. Let $u,v\in V(G)$ such that $N(u)=N(v)=\{x,y\}$. Let $w\in V_0(G)$ and $H$ be a copy of $P_k+tP_2$ in $G+wx$. By Lemma \ref{lem4}, $wx\in E(H)$. By $k\geq 6$, $\{u,v\}\not\subset V(H)$. We may therefore assume that $u\notin V(H)$. Thus, $H-wx+ux$ is a copy of $P_k+tP_2$ in $G$, a contradiction.
\end{proof}

Let $G$ be a graph, for any vertices $u,v\in G$, denoted by $S_{uv}$ the set of the paths with endpoints $u$ and $v$. The distance between $u$ and $v$ is denoted as $dist(u,v)$, where $dist(u,v)=$ min$\{|P|:P\in S_{uv}\}$. Define $dist(u,v)=\infty$, if $S_{uv}=\emptyset$. Let $G_1,G_2\subset G$. The distance between $G_1$ and $G_2$ is denoted as $dist(G_1,G_2)$, where $dist(G_1,G_2)=$ min$\{dist(u,v):u\in V(G_1),v\in V(G_2)\}$.

\section{The main results}

\begin{lem}\label{lem6}

Let $G$ be a $(P_7+mP_2)$-saturated graph with $|V_0(G)|\geq 2$. Let $Q=Q_1+Q_2+\dots +Q_l$, where $Q_1,Q_2,\dots ,Q_l$ are all the nontrivial components of $G$. If $|Q|\geq 2m+7$ and $P_7\subset Q_i$, for all $1\leq i\leq l$, then

  \item (1) $G$ is a $(m+3)P_2$-saturated graph,
  \item (2) $|E(G)|>3m+25$.

\end{lem}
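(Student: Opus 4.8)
The two conclusions share a common engine, so I would first pin down the relevant matching invariants. Because $|V_0(G)|\ge2$, fix two isolated vertices $w_1,w_2$; saturation gives a copy $H$ of $P_7+mP_2$ in $G+w_1w_2$, and since $w_1,w_2$ have degree $1$ there, the edge $w_1w_2$ can only be one of the $m$ copies of $P_2$ in $H$. Deleting it leaves $P_7+(m-1)P_2\subseteq G$, so $\alpha_7(G)\ge m-1$; with $(P_7+mP_2)$-freeness this forces $\alpha_7(G)=m-1$. The easy half of (1) is then free: for any non-edge $uv$, $G+uv\supseteq P_7+mP_2$ contains a matching of size $m+3$, so $\alpha'(G+uv)\ge m+3$, and it remains only to prove $\alpha'(G)\le m+2$.

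For that bound I would use that a copy of $P_7+kP_2$ must place its connected $P_7$ inside a single $Q_i$. Additivity then gives $\alpha'(G)=\sum_i\alpha'(Q_i)$ and $\alpha_7(G)=\max_i\bigl(\alpha_7(Q_i)+\sum_{j\ne i}\alpha'(Q_j)\bigr)$, hence
\[
\alpha'(G)-\alpha_7(G)=\min_i\bigl(\alpha'(Q_i)-\alpha_7(Q_i)\bigr).
\]
As each $Q_i$ contains a $P_7$ carrying a matching of size $3$, every term is at least $3$, so $\alpha'(G)\ge m+2$ and the task is to show the minimum is $3$. Writing $\alpha'(Q_i)=\tfrac12(|Q_i|-d_i)$ via Lemma \ref{lem1} and $\alpha_7(Q_i)=\tfrac12(|Q_i|-7-d_i^{*})$, with $d_i^{*}$ the least deficiency remaining after deleting a $P_7$, the excess equals $\tfrac12(7+d_i^{*}-d_i)$, which is $3$ exactly when $d_i^{*}=d_i-1$. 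Granting the structural fact below that every $Q_i$ is a clique, an odd clique gives $d_i=1,\ d_i^{*}=0$ and excess $3$, whereas an even clique gives $d_i=0,\ d_i^{*}=1$ and excess exactly $4$. If the minimum were $\ge4$, all $Q_i$ would be even cliques and $|Q|=2\alpha'(G)=2(m-1)+8=2m+6$ would contradict $|Q|\ge2m+7$; therefore some excess is $3$ and $\alpha'(G)=m+2$, finishing (1).

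For (2) I would feed (1) back in. Since $G$ is $(m+3)P_2$-saturated we have $\sum_i\alpha'(Q_i)=m+2$, so $3m+6=3\sum_i\alpha'(Q_i)$ and it suffices to show the surplus $\sum_i\bigl(|E(Q_i)|-3\alpha'(Q_i)\bigr)$ exceeds $19$. A single nontrivial component is impossible: being a clique with matching number $m+2$ it would have order at most $2m+5<2m+7$, so $l\ge2$. Each $Q_i$ is connected, has $\delta\ge2$ and contains $P_7$, and once it is known to be a clique on at least $7$ vertices we get $|E(Q_i)|-3\alpha'(Q_i)\ge\binom{7}{2}-9=12$ (attained by $K_7$). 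With $l\ge2$ the surplus is at least $24>19$, whence $|E(G)|>3m+25$.

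The single point on which both arguments lean—and the hard part—is the structural claim that every $P_7$-carrying component of $G$ is a clique (equivalently, that it has deficiency at most $1$ and parity-determined excess). This is where I expect the real work: one must use saturation component-internally, showing that any non-edge $xy$ inside some $Q_i$ whose addition fails to enlarge $\alpha'(Q_i)$ or $\alpha_7(Q_i)$ would leave $G+xy$ free of $P_7+mP_2$, contradicting saturation; ruling out the sparse near-misses (long cycles, theta-like graphs) via Lemma \ref{lem4} ($V_1(G)=\emptyset$, so $\delta\ge2$) and Lemma \ref{lem2} on degree-$2$ vertices is the bulk of the case analysis.
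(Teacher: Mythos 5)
Your bookkeeping is fine as far as it goes: granting that every nontrivial component is a clique, the excess computation (excess $3$ for odd cliques, $4$ for even ones), the parity contradiction with $|Q|\ge 2m+7$, and the surplus count $|E(Q_i)|-3\alpha'(Q_i)\ge 12$ in part (2) are all correct. But the proposal has a genuine gap, and it sits exactly where you flag it: the claim that each $Q_i$ is a clique is never proven, only ``granted.'' That claim \emph{is} the lemma; everything else is arithmetic. Your sketch of how to prove it --- invoke Observation \ref{lem2.4} (any non-edge inside $Q_i$ must raise $\alpha'(Q_i)$ or $\alpha_7(Q_i)$) and then ``rule out the sparse near-misses'' --- is not an argument; deducing cliquehood from that exchange property is a substantial structural case analysis, and it is precisely what the paper spends the bulk of its proof on, by a different route: assuming $G$ contains $(m+3)P_2$, choosing a maximum matching $M$ that misses a vertex of $Q_1$, extracting a $P_5$, and case-analyzing (in the spirit of Lemma \ref{lem5}) until $Q_1$ is forced to be a fan, $F^2_{j,l}$, $\Delta F_i$, $\Delta_+F_i$, or book-structural, contradicting $P_7\subset Q_1$. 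Notably the paper never proves unconditional cliquehood at all.

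There is also a circularity problem in your plan. The clean way to prove cliquehood is Berge's formula (Lemma \ref{lem1}): take a deficiency-realizing set $Y$ and show $Q[Y\cup V(Q_i')]$ is complete because a non-edge there would leave $\alpha'$ at $m+2$ while saturation forces an $(m+3)$-matching after any edge addition. But that argument (the paper's part (2)) presupposes exactly what you are trying to prove in part (1), namely $\alpha'(Q)=m+2$ and the $(m+3)P_2$-saturation; and even then one still needs Lemma \ref{lem4} to kill the case $Y\neq\emptyset$, where $Q$ is connected but genuinely not a clique. So your part (1) needs cliquehood, and the available proof of cliquehood needs part (1). To repair the proposal you would have to supply an independent proof of the structural claim, which is the hard content you have deferred.
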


\begin{proof}

(1) Since $G$ is a $(P_7+mP_2)$-saturated graph, $G+e$ contains $P_7+mP_2$ for any edge $e\in E(\overline G)$. It follows that $G+e$ contains $(m+3)P_2$. If $G$ is not a $(m+3)P_2$-saturated graph, then $G$ contains $(m+3)P_2$. Since $|Q|\geq 2m+7$, there exists a vertex $v$ not in the copy of $(m+3)P_2$, say $v\in Q_1$.

Since $G$ is a $(P_7+mP_2)$-saturated graph with $|V_0(G)|\geq 2$, we have $V_1(G)= \emptyset$ by Lemma \ref{lem4}. Hence, $\delta (Q)\geq 2$.

 Let $M$ be a copy of $(s+3)P_2$ in $G$ such that

\begin{itemize}
  \item $(i)$ $V(Q_1)-V(M)\neq \emptyset$, and
  \item $(ii)$ subject to $(i)$, $s$ achieves the maximum.
\end{itemize}
Then, $s\geq m$. Since $P_7\subset Q_1$, $Q_1$ contains a copy of $3P_2$. This together with the choice of $M$ implies $|E(M)\cap E(Q_1)|\geq 3 $. From $(i)$, we know that $|V(Q_1)-V(M)|\geq 1$. Since $Q_1$ is connected, there exists a path $vu_1v_1\subset Q_1$, where $v\in V(Q_1)\setminus V(M)$ and $u_1v_1\in E(M)$. If $v_1$ is incident with an edge $u_2v_2$ in $M$, then we have $Q_1[\{v,u_1,v_1,u_2,v_2\}]$ contains a $P_5$. Otherwise, since $d(v_1)\geq 2$, there exists a vertex in $V(Q_1)\setminus V(M)$, say $z$ (possibly $z=v$), adjacent to $v_1$. Since $Q_1$ is connected, there exists an edge in $M$, say $u_2v_2$ such that $2\leq dist(M\setminus \{u_1v_1\},Q_1[\{v,z,u_1,v_1\}])=dist(u_2v_2,Q_1[\{v,z,u_1,v_1\}])< \infty$. Then there exists a copy of $P_5$, say $L$, in $Q_1[\{v,z,u_1,v_1,u_2,v_2\}]$. Let $L=x_1x_2\dots x_5$. In addition, $L$ has at most two $P_2$s used in $E(M)\cap E(Q_1)$.

Let $H=L\cup H_1$ be a subgraph of $Q$ with $H_1\cong (s+1)P_2$ and $V(L)\cap V(H_1)=\emptyset$. Let $D=\{v\in H_1: dist(v,V(L))=dist(H_1,V(L))\}$. Since $Q_1$ is connected, $D\neq \emptyset$. Considering $x\in D$ and $N_{H_1}(x)=\{y\}$. Denote $F_L(x)=\{v\in L: dist(v,x)=dist(L,x)\}$. If $x_1$ or $x_5\in F_L(x)$, $G$ contains a copy of $P_7+sP_2$. Since $s\geq m$, $G$ contains a copy of $P_7+mP_2$, a contradiction. So $x_1,x_5\notin F_L(x)$.

We only need to consider the following two cases by the choice of $M$.

$\mathbf {Case~1}$. $|(V(Q_1)\setminus V(H))\cap N(x_1)|\geq 1$ or $|(V(Q_1)\setminus V(H))\cap N(x_5)|\geq 1$.

Without loss of generality, we can assume $|(V(Q_1)\setminus V(H))\cap N(x_5)|\geq 1$, so there exists a vertex $v\in (V(Q_1)\setminus V(H))\cap N(x_5)$. Then $N(x_1)\subset \{x_2,x_3,x_4,x_5,v\}$ and $N(v)\subset V(L)$. If $u\in F_L(x)$, then $x\in N(u)$. Otherwise, it is a contradiction to $(ii)$. There exists no vertex $u\in V(Q_1)\setminus (V(H)\cup \{x\})$, such that $uw\in E(Q_1)$, where $w\in V(L)\cup \{v\}$. Otherwise, it is a contradiction to $(ii)$.

Suppose that $x_2\in F_L(x)$. Then $G$ contains a copy of $P_7+sP_2$ with $P_7=yxx_2x_3x_4x_5v$, a contradiction to that $G$ is a $(P_7+mP_2)$-saturated graph. So $x_2\notin F_L(x)$, which implies $N(x_2)\subset V(L)\cup \{v\}$. And $N(x_5)\subset V(L)\cup \{v\}$.
Suppose that $x_3\in F_L(x)$ or $x_4\in F_L(x)$, as the detail discussions are similar, we only consider $x_4\in F_L(x)$.
Assume that $x_3\in N(y)$. Then $G$ contains a copy of $P_7+sP_2$, a contradiction. Then $x_3\notin N(y)$. Assume that $u\in N(y)\setminus \{x,x_4\}$. Then $P_7=x_1x_2x_3x_4xyu$ and $P_2=x_5v$. Hence, $G$ contains a copy of $P_7+sP_2$, a contradiction. Then $N(y)=\{x,x_4\}$. Moreover, $N(x)=\{y,x_4\}$. Replacing $x_1x_2x_3x_4x_5v$ with $x_1x_2x_3x_4xy$ and $xy$ with $x_5v$, then $N(v)=\{x_5,x_4\}$. Similarly, $N(x_5)=\{x_4,v\}$.
Since $N(x_1)\subset \{x_2,x_3,x_4\}$, one has that $N(x_1)=\{x_2,x_4\}$ or $N(x_1)=\{x_2,x_3\}$ or $N(x_1)=\{x_2,x_3,x_4\}$.
If there exists an edge $u_2v_2\in E(H_1)\setminus \{xy\}$ that adjacent to $x_4$, then replacing $x_1x_2x_3x_4x_5v$ with $x_1x_2x_3x_4u_2v_2$. Therefore, $N(v_2)=\{u_2,x_4\}$. Similarly, $N(u_2)=\{v_2,x_4\}$.
As for $x_3$, if there exists an edge $u_3v_3\in E(H_1)\setminus \{xy\}$ that adjacent to $x_3$, then replacing $x_1x_2x_3x_4x_5v$ with $vx_5x_4x_3u_3v_3$. Therefore, $N(v_3)=\{u_3,x_3\}$ and $N(u_3)=\{v_3,x_3\}$. Hence, $Q_1$ is isomorphic to $F^2_{j,l}$, for some $j+l\geq 3$ and $j,l\geq 1$, a contradiction to $P_7\subset Q_1$. So $N(x_3)\subset \{x_1,x_2,x_4\}$, then $Q_1[\{x_1,x_2,x_3,x_4\}]$ is a subgraph of $K_4$. Hence, $Q_1$ is book-structural, or $Q_1$ is isomorphic to $\Delta F_i$ or $\Delta_+ F_i$ for some $i\geq 3$, a contradiction to $P_7\subset Q_1$.

$\mathbf {Case~2}$. $|(V(Q_1)\setminus V(H))\cap N(x_1)|=0$ and $|(V(Q_1)\setminus V(H))\cap N(x_5)|=0$.

We have $N(x_1)\subset \{x_2,x_3,x_4,x_5\}$ and $N(x_5)\subset \{x_1,x_2,x_3,x_4\}$.
Suppose that $x_2\in F_L(x)$ or $x_4\in F_L(x)$, as the discussions are similar, we only consider $x_4\in F_L(x)$.
Then $N(x_5)=\{x_2,x_4\}$ and $N(x_1)=\{x_2,x_4\}$. Otherwise, $G$ contains a copy of $P_7+sP_2$, a contradiction. But in this situation, $Q_1$ is book-structural, a contradiction. Hence, $x_2,x_4\notin F_L(x)$.
If $x_3\in F_L(x)$, then let $dist(x_3,x)=t$. By $(ii)$, we have $t\leq 3$. If $t=3$, then there exists a path $x_3ux\in S_{x_3x}$ with $u\in V(Q_1)\setminus V(H)$, the discussion is similar to Case $1$ by replacing $x_1x_2x_3x_4x_5v$ with $x_1x_2x_3uxy$. So we only discuss $t=2$, which implies $x_3\in N(x)$. We have $N(x_1)=\{x_2,x_3\}$, otherwise, $G$ contains a copy of $P_7+sP_2$, a contradiction. Replacing $x_1x_2x_3x_4x_5$ with $x_2x_1x_3x_4x_5$, then $N(x_2)=\{x_1,x_3\}$. Similarly, $N(x_5)=\{x_3,x_4\}$, $N(y)=\{x_3,x\}$, $N(x_4)=\{x_3,x_5\}$ and $N(x)=\{x_3,y\}$. As for $x_3$, if there exists a vertex $w\in V(Q_1)\setminus V(H)$ that adjacent to $x_3$, then by $(ii)$ and $\delta (Q)\geq 2$ there exists an edge $u_3v_3\in E(H_1)\setminus \{xy\}$ that adjacent to $w$. It can be transformed into Case $1$ by replacing $x_1x_2x_3x_4x_5v$ with $x_1x_2x_3wu_3v_3$. If there exists an edge $u_2v_2\in E(H_1)\setminus \{xy\}$ that adjacent to $x_3$, then replacing $x_1x_2x_3x_4x_5$ with $x_1x_2x_3u_2v_2$. Therefore, $N(v_2)=\{u_2,x_3\}$. Similarly, $N(u_2)=\{v_2,x_3\}$. Hence, $G$ is isomorphic to $F_i$, for some $i\geq 3$, a contradiction to $P_7\subset Q_1$. Hence, $G$ is a $(m+3)P_2$-saturated graph.

(2) Suppose that $|E(G)|\leq 3m+25$. By $(1)$, $G$ is a $(m+3)P_2$-saturated graph, then $\alpha^{\prime}(Q)=m+2$. By Lemma \ref {lem1}, there exists $Y\subset V(Q)$ such that

\begin{center}
$m+2=\frac{1}{2}(|Q|+|Y|-o(Q-Y))$.
\end{center}

Let $Q^{\prime}_1, Q^{\prime}_2, \dots , Q^{\prime}_p$ be the components of $Q-Y$.

Suppose that there exist two vertices $u,v\in Y\cup V(Q^{\prime}_i)$ such that $uv\notin E(Q)$. Let $Q^{\prime}=Q+uv$. Since $Q$ is a $(m+3)P_2$-saturated graph, $\alpha^{\prime}(Q^{\prime})\geq m+3$. On the other hand,

\begin{center}
$m+2=\frac{1}{2}(|Q|+|Y|-o(Q-Y))=\frac{1}{2}(|Q^{\prime}|+|Y|-o(Q^{\prime}-Y))\geq \alpha^{\prime}(Q^{\prime})\geq m+3$,
\end{center}
a contradiction. Hence, $Q[Y\cup V(Q^{\prime}_i)]$ is a complete graph for $1\leq i\leq p$.

If $Y=\emptyset$, then $Q^{\prime}_i=Q_i$ is a complete graph, $p=l$ and $\delta(Q_i)\geq 6$, for any $1\leq i\leq p$. $2(3m+25)\geq 2|E(Q)|\geq 6(2m+7)=12m+42$. Hence, $m=1$. Then $|E(Q)|\geq |E(K_9)|=36>28$, a contradiction to $|E(G)|\leq 3m+25$. Hence, $Y\neq \emptyset$. So we have $l=1$, then $Q=Q_1$. Let $x\in Y$ and $v\in V_0(G)$. By Lemma \ref{lem4}, we have $N[x]\cup \{v\}\subset V(H)$, where $H$ is a copy of $P_7+mP_2$ in $G+xv$. Then $2m+7+1\leq |Q|+1=|N[x]\cup \{v\}|\leq |H|=2m+7$, a contradiction. Hence, $|E(G)|>3m+25$.
\end{proof}

\begin{thm}\label{thm1}
Suppose that $n$ and $t$ are two positive integers with $n$ sufficiently large. Let $G$ be a $(P_7+tP_2)$-saturated graph with $|V_0(G)|\geq 2$ and $|E(G)|\leq 3t+25$. Then $|E(G)|=3t+25$ and $G=K_8+cK_3+F+\overline{K}_{n-3t_3-|F|-8}$, where $c$ is a non-negative integer, $F$ is the disjoint union of fans with order at least $7$ and $\alpha^{\prime}(cK_3+F)=t-1$.
\end{thm}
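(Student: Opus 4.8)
The plan is to analyze $G$ one component at a time and to show that, besides the isolated vertices, it consists of a single $K_8$ carrying the unique $P_7$ together with components of ``matching-ratio three'', i.e. triangles and fans. Since $|V_0(G)|\ge 2$, Lemma \ref{lem4} gives $V_1(G)=\emptyset$, so every nontrivial component $Q$ has $\delta(Q)\ge 2$. A pendant-edge argument (adding $wx$ with $w\in V_0(G)$) first shows $G$ contains $P_7$, and then that $\alpha_7(G)=t-1$: indeed $G$ is $(P_7+tP_2)$-free, and since adding one edge raises $\alpha_7$ by at most one (Observation \ref{lem2.4}), saturation forces $\alpha_7(G)=t-1$ and $\alpha_7(G+e)=t$ for every non-edge $e$. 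I would then partition the nontrivial components into those containing $P_7$ (union $\mathcal{A}$) and the $P_7$-free ones (union $\mathcal{B}$).

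For $\mathcal{B}$: a component with $|Q|\le 6$ is a clique of odd order or of order $6$ by Lemma \ref{lem15}, hence $K_3$, $K_5$, or $K_6$; a $P_7$-free component with $|Q|\ge 7$ is connected with $\delta\ge 2$, so it contains $P_5$ and Lemma \ref{lem5} makes it book-structural, $F_i$, $\Delta F_i$, $\Delta_+ F_i$, or $F^2_{i,j}$. Book-structural is excluded by Lemma \ref{lem10}. To discard $\Delta F_i$, $\Delta_+ F_i$, and $F^2_{i,j}$ I would use that each has even order with a perfect matching that saturates its center(s): for a center $x$ and $w\in V_0(G)$, adding $wx$ leaves the component of odd order (so $\alpha^{\prime}$ cannot grow) and creates no $P_7$ through $w$ (the longest path of the component starting at $x$ has fewer than six vertices), whence $\alpha_7(G+wx)=t-1$, contradicting saturation. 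A fan $F_i$ instead has odd order with its center unsaturated, so the pendant does raise $\alpha^{\prime}$ — which is exactly why fans survive. Thus every $\mathcal{B}$-component is $K_3$, $K_5$, $K_6$, or a fan of order at least $7$.

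The crux, and the step I expect to be hardest, is to pin down $\mathcal{A}=K_8$. The governing observation is that any $8$-vertex $P_7$-component $C$ has $\alpha_7(C)=0$ (deleting a $P_7$ leaves one vertex), while a $7$-vertex $P_7$-component is impossible since even after a pendant it cannot host $P_7+P_2$, so $\alpha_7$ never reaches $t$. Moreover, if every nontrivial component contained $P_7$ with $|\mathcal{A}|\ge 2t+7$, Lemma \ref{lem6}(2) would already force $|E(G)|>3t+25$, so that extreme is excluded and only bounded all-$P_7$ configurations and the mixed case survive to the surplus count. The main bookkeeping device is the surplus $s(Q)=|E(Q)|-3\alpha^{\prime}(Q)$: writing $\alpha^{\prime}(G)=\alpha_7(G)+\rho$ with $\rho=\min\{\alpha^{\prime}(C)-\alpha_7(C):P_7\subseteq C\}$, one gets $|E(G)|=3(t-1)+\bigl(3\rho+\sum_Q s(Q)\bigr)$, so $|E(G)|\le 3t+25$ becomes $3\rho+\sum_Q s(Q)\le 28$. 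Since $s(K_3)=s(F_i)=0$ while $s(K_5),s(K_6)>0$, and since for a host $C$ one has $3\rho+s(C)=|E(C)|-3\alpha_7(C)$, this single inequality simultaneously forces exactly one $P_7$-component (two dense hosts overshoot $28$, e.g. $|E(K_9)|-3\alpha_7(K_9)=33$), that component on exactly eight vertices with $|E(C)|=28$, and every $\mathcal{B}$-component of surplus $0$ (so $K_5,K_6$ disappear). Finally, the unique host satisfies $\alpha_7(C)=0$; once one checks $\alpha^{\prime}(C)=4$, adding any internal non-edge $uv$ keeps both $\alpha^{\prime}(C)=4$ and $\alpha_7(C)=0$, so it cannot raise $\alpha_7(G)$, and saturation therefore forbids non-edges in $C$, i.e. $C=K_8$.

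Assembling the count, $K_8$ contributes $28$ edges and, as the $P_7$-host with $\alpha_7(K_8)=0$, contributes $0$ to $\alpha_7(G)$, while the remaining components are triangles and fans of surplus $0$; hence $\alpha_7(G)=\alpha^{\prime}(\mathcal{B})=t-1$ gives $\alpha^{\prime}(cK_3+F)=t-1$, and $|E(G)|=28+3(t-1)=3t+25$, forcing equality throughout and yielding $G=K_8+cK_3+F+\overline{K}_{n-3c-|F|-8}$. The genuine difficulty, as flagged, is the simultaneous control of the number, order, and completeness of the $P_7$-carrying part against saturation, where the ever-present danger is a sparse, negative-surplus component (a long cycle or an $F^2_{i,j}$-type graph) slipping under the edge budget; Lemma \ref{lem4}, locating $N[x]\cup\{w\}$ inside every created $P_7+tP_2$, together with the non-edge tests above, is the mechanism I would use to exclude each such possibility.
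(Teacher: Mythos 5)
Your setup (isolated vertices force $V_1(G)=\emptyset$, $P_7\subset G$, $\alpha_7(G)=t-1$; classification of the $P_7$-free components as $K_3$, $K_5$, $K_6$ or fans via Lemmas \ref{lem15}, \ref{lem5}, \ref{lem10} and the center-pendant argument) matches the paper. But there is a genuine gap at exactly the step you flag as the crux: pinning the $P_7$-carrying part down to a single $K_8$. Your surplus inequality $3\rho+\sum_Q s(Q)\le 28$ only yields the conclusion if every term it contains is suitably bounded below, and for $P_7$-containing components neither $s(Q)=|E(Q)|-3\alpha^{\prime}(Q)$ nor $|E(C)|-3\alpha_7(C)$ has an a priori lower bound: sparse hosts (e.g.\ cycle-like or $F^2_{i,j}$-like graphs containing $P_7$) have negative surplus, so ``two dense hosts overshoot $28$'' assumes the very denseness you need to prove, and the claimed exclusion of a $7$-vertex host also fails when a second host exists (the pendant edge can then serve as a $P_2$ for a $P_7$ located elsewhere, so $\alpha_7$ can reach $t$). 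Saying that Lemma \ref{lem4} ``together with the non-edge tests'' would exclude each sparse candidate is a gesture, not an argument: there are infinitely many candidate sparse hosts, and each one you name ($C_7$, $C_9$, \dots) needs its own saturation computation, with no uniform mechanism supplied.

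The paper closes precisely this hole with Lemma \ref{lem6}, used in a way your proposal does not replicate. Note that Lemma \ref{lem6} cannot be applied to $G$ itself in the mixed case (it requires \emph{all} nontrivial components to contain $P_7$); the paper's key move is the reduction $G=G^{\prime}+t_3K_3+t_5K_5+t_6K_6+F$, the observation that $G^{\prime}$ (which keeps the isolated vertices) is itself $(P_7+t^{\prime}P_2)$-saturated with $t^{\prime}=t-(t_3+2t_5+3t_6+\frac{1}{2}(|F|-F_c))\ge 1$ and $|E(G^{\prime})|\le 3t^{\prime}+25-(4t_5+6t_6)$, and then the application of Lemma \ref{lem6} to $G^{\prime}$ to force $|Q^{\prime}|\le 2t^{\prime}+6$. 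It is inside the proof of Lemma \ref{lem6} -- via the statement that such a graph is $(m+3)P_2$-saturated, hence has matching number only $m+2$, followed by the Berge--Tutte formula -- that all sparse hosts are killed at once; afterwards saturation against $V_0(G)$ forces $Q^{\prime}$ complete of order exactly $2t^{\prime}+6$, and the edge budget $\frac{1}{2}(2t^{\prime}+6)(2t^{\prime}+5)\le 3t^{\prime}+25$ forces $t^{\prime}=1$, $Q^{\prime}=K_8$ and $t_5=t_6=0$. Your proposal invokes Lemma \ref{lem6} only in the unmixed extreme ($\mathcal{B}=\emptyset$), and has no substitute for this reduction; as written, the argument does not go through.
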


\begin{proof}

Since $G$ is a $(P_7+tP_2)$-saturated graph with $|V_0(G)|\geq 2$, one has that $V_1(G)=\emptyset$ and $P_7\subset G$. By Lemma \ref{lem15}, any component with order $i$ in $G$ is a complete graph and there exists no $K_2$ or $K_4$ in $G$, where $i\in\{3,5,6\}$.
 Let $W$ be a component of $G$ with $|W|\geq 7$.
\begin {claim}\label{cl} If $W$ is not isomorphic to $F_i$ for some $i\geq 3$, then $P_7\subset W$.
\end {claim}

\noindent $\mathbf {Proof\ of\ Claim\ \ref{cl}.}$ Assuming $P_k=x_1x_2\dots x_k$ is the longest path of $W$. By Lemma \ref{lem4}, $\delta (W)\geq 2$. To the contrary, assume $k\leq 4$. If $k=2$ or $3$, then $Q$ is an edge or a star, a contradiction to $\delta (W)\geq 2$. If $k=4$, then $C_4\subset W[\{x_1,x_2,x_3,x_4\}]$. Since $|Q|\geq 7$, then there exists a vertex $v$ in $W\setminus \{x_1,x_2,x_3,x_4\}$ such that $dist(v, Q[\{x_1,x_2,x_3,x_4\}])=2$. Then $P_5\subset W[\{v,x_1,x_2,x_3,x_4\}]$, a contradiction. Hence, $k\geq 5$, which implies $P_5\subset W$.

If $P_7\not \subset W$, by Lemma \ref{lem5}, $W$ is book-structural, or $W$ is isomorphic to each of $F_i,\ \Delta F_i$ or $\Delta_+ F_i$ for some $i\geq 3$, or $W$ is isomorphic to $F^2_{i,j}$ for some $i+j\geq 3$ and $i,j\geq 1$.
By Lemma \ref{lem10}, $W$ is not book-structural. Notice that joining the center vertex in $\Delta fan$ to a vertex in $V_0(G)$ does not increase the matching number or get a new $P_7$ in $G$, then there exists no $\Delta fan$ in $G$. Similarly, there exists no $ffan$ or $\Delta_+ fan$ in $G$. Hence, if $W$ is not isomorphic to $F_i$ for some $i\geq 3$, then $P_7\subset W$.
$\hfill\blacksquare$

Let $G=G^{\prime}+t_3K_3+t_5K_5+t_6K_6+F$, where $t_i$ is the number of $K_i$ of $G$, $i=3,5,6$ and $F$ is the disjoint union of fans with order at least $7$.
By Claim \ref{cl}, there exists $P_7$ in all nontrivial components of $G^{\prime}$ and $G^{\prime}$ is not book-structural.
Denoted by $F_c$ the number of $fan$ in $F$. We have
\begin {center}
$t_3+2t_5+3t_6+\frac{1}{2}(|F|-F_c)\leq t-1$.
\end {center}
 Let
\begin {center}
$t^{\prime}=t-(t_3+2t_5+3t_6+\frac{1}{2}(|F|-F_c))$,
\end {center}
then $t^{\prime}\geq 1$ and $|G^{\prime}|=n^{\prime}=n-(3t_3+5t_5+6t_6+|F|)$.

Note that $G^{\prime}$ is a $(P_7+t^{\prime}P_2)$-saturated graph, $|V_0(G)|\geq 2$ and $\delta (Q^{\prime})\geq 2$, where $Q^{\prime}=G^{\prime}-V_0(G)$. Since
\begin {center}
 $|E(G^{\prime})|=|E(G)|-(3t_3+10t_5+15t_6+\frac{3}{2}(|F|-F_c))\leq 3t^{\prime}+25-(4t_5+6t_6)\leq 3t^{\prime}+25$,
\end {center}
by Lemma \ref{lem6}, we have $|Q^{\prime}|\leq 2t^{\prime}+6$. Thus, there is no copy of $P_7+t^{\prime}P_2$ in $G^{\prime}+e$, for any $e\in \overline {Q^{\prime}}$ (if any), a contradiction to that $G^{\prime}$ is a $(P_7+t^{\prime}P_2)$-saturated graph. Then $|E(\overline {Q^{\prime}})|=0$. So $Q^{\prime}$ is a complete graph. Since there is a copy of $P_7+t^{\prime}P_2$ in $G+\{uv\}$ with $u\in Q^{\prime}$ and $v\in V_0(G^{\prime})$, one has that $|Q^{\prime}|\geq 2t^{\prime}+7-1=2t^{\prime}+6$. Thus, $Q^{\prime}=K_{2t^{\prime}+6}$. Notice that
\begin {center}
$|E(Q^{\prime})|=|E(G^{\prime})|=\frac{1}{2}(2t^{\prime}+6)(2t^{\prime}+5)\leq 3t^{\prime}+25$.
\end {center}
 It follows that $t^{\prime}=1$ and $Q^{\prime}=K_8$. Hence, $t_5=0$ and $t_6=0$. Consequently,
\begin {center}
$G=K_8+cK_3+F+\overline{K}_{n-3t_3-|F|-8}$,
\end {center}
where $\alpha^{\prime}(cK_3+F)=t-1$, $c$ is a non negative integer and $F$ is the set of fan with order at least $7$.
\end{proof}

\begin{lem}\label{lem8}
Let $G$ be a $(P_7+tP_2)$-saturated graph with $|V_0(G)|=0$. If the number of tree components in $G$ is at least two, then the order of each tree component in $G$ is at least $14$.
\end{lem}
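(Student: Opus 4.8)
The plan is to argue by contradiction: suppose some tree component $T$ has $|T|\le 13$, and fix a second tree component $T'$ (which exists by hypothesis). Since $|V_0(G)|=0$, every tree component has minimum degree $1$, so Lemma \ref{lem5} does not apply to $T$ or $T'$; instead the structure of these components must be controlled directly through their longest paths, in the surgical style of the proof of Lemma \ref{lem6}. The two global facts I would extract first are that $P_7\subset G$ and that $\alpha_7(G)=t-1$: the upper bound is $(P_7+tP_2)$-freeness, and the matching lower bound follows by inserting a non-edge and reading off where the guaranteed copy of $P_7+tP_2$ places the new edge. It is convenient to phrase this through the decomposition $\alpha_7(G)=\alpha^{\prime}(G)-L$, where $L$ is the minimum, over components $C$ with $P_7\subset C$, of the matching lost by deleting the seven vertices of a best-placed $P_7$ in $C$; in these terms Observation \ref{lem2.4} says exactly that inserting any non-edge must either raise $\alpha^{\prime}$ by one or lower $L$ by one.

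Next I would pin down the internal structure of $T$. Applying Observation \ref{lem2.4}(1) to every chord $uv$ of $T$, adding $uv$ must increase $\alpha^{\prime}(T)$, or increase $\alpha_7(T)$, or create a first $P_7$ inside $T+uv$. Running this against a longest path $u_1\cdots u_p$ of $T$ (with $u_1,u_p$ leaves) and using $|T|\le 13$, I would show that $T$ cannot host a $P_7$ together with a large residual matching, i.e.\ the ``loss'' of $T$ is large, so the savings needed to push $\alpha_7$ up to $t$ cannot come from inside $T$ alone. This is the step where the size bound begins to bite: a tree on at most $13$ vertices has matching number at most $6$, and once seven of its vertices are committed to a $P_7$ at most three edges survive, so deleting a $P_7$ from $T$ is expensive.

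The heart of the argument is then to bring in $T'$. I would add a cross-edge $u_1w_1$ joining a leaf of $T$ to a leaf of $T'$; by saturation $G+u_1w_1$ contains a copy $H\cong P_7+tP_2$ using $u_1w_1$, and $T\cup T'+u_1w_1$ is again a tree. Splitting into cases according to whether $u_1w_1$ lies on the $P_7$ of $H$ or on one of its $P_2$'s, I would show that the only way to gain the extra matching edge is to route the $P_7$ through the junction $u_1w_1$, with $i$ vertices of the path inside $T$ and $7-i$ inside $T'$. Then, exactly as in the surgery arguments of Lemma \ref{lem6}, I would try to relocate this $P_7$ and the $t$ disjoint edges entirely into $G$: the spare matching made available by the second tree $T'$, together with the forced expensiveness of $T$ computed above, lets me rebuild a copy of $P_7+tP_2$ already present in $G$, contradicting freeness, unless $T$ is large enough to absorb the path and still supply its share of the matching, which is precisely the requirement $|T|\ge 14$.

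The main obstacle I anticipate is the bookkeeping in this last step: one must simultaneously control the position of $u_1w_1$ within $H$, the split $i$ of the $P_7$ between the two trees, and the distribution of the $t$ matching edges among $T$, $T'$, and the remaining components, while ruling out every relocation that would already exhibit $P_7+tP_2$ in $G$. Degenerate small trees (for instance $T=K_2$ or a star, where one cannot select two internal leaves at large distance) will have to be disposed of separately, using Lemma \ref{lem2} and the fact that $T'$ still furnishes a free edge. Getting the numerology to produce exactly the threshold $14=7+7$, seven vertices for a $P_7$ crossing the junction and enough remaining vertices in each tree to keep the global matching from dropping, is the delicate part, and I expect it to hinge on the bound $\alpha^{\prime}(T)\le\lfloor |T|/2\rfloor$ sharpened by the acyclic structure of $T$.
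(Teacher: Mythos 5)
Your plan has a genuine gap, and it begins with your two ``global facts.'' The claim $P_7\subset G$ is false precisely in the situation this lemma addresses: a $(P_7+tP_2)$-saturated graph whose components are largely trees typically contains no $P_7$ at all (the paper's extremal graphs $H=(q-1)T+T^*$ for large $n$ are exactly of this type, and the paper states explicitly that $H$ contains no copy of $P_7$), so $\alpha_7(G)$ need not even be defined and the decomposition $\alpha_7(G)=\alpha^{\prime}(G)-L$ you want to build on collapses. More importantly, your proposal never isolates the fact that actually drives the bound: by Lemma \ref{lem2}, a tree component has \emph{no vertex of degree $2$}, since a degree-$2$ vertex would force its two neighbours to be adjacent and hence create a cycle. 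Thus every vertex of a tree component $T$ is a leaf or has degree at least $3$; writing $S$ for the set of degree-$\geq 3$ vertices, a degree count in the tree gives at least $|S|+2$ leaves, so $|T|\geq 2|S|+2$. The threshold $14$ is $2\cdot 6+2$, i.e.\ it comes from proving $|S|\geq 6$; it is not ``$7+7$,'' and no amount of matching-plus-path relocation across a junction of two trees will produce it in the form you describe.

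The paper's proof accordingly reduces everything to forcing $|S|\geq 6$: take a longest path $x_1x_2\dots x_k$ in $T$ and use Observation \ref{lem2.4}(1) to kill $k\in\{4,5\}$ (adding $x_2x_4$ creates no $P_7$ and gains no matching edge); use the second tree component only for the degenerate case $k\leq 3$ (star or single edge), where one adds a cross-edge and re-routes a subpath inside the other tree; for $k=6$ and $k=7$, add carefully chosen chords to rule out $|S|\leq 5$; and for $k\geq 8$ the at least six internal vertices of the path lie in $S$ automatically because degree $2$ is forbidden. Your steps of bounding the ``loss'' of $T$, routing a $P_7$ through a cross-edge, and relocating are close in spirit to the paper's treatment of the degenerate case only; as the main engine they leave the actual inequality $|T|\geq 14$ unproven, because nothing in your bookkeeping forces six branch vertices, which for trees is the only route to $14$. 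To repair the argument you would need to add the degree-$2$ exclusion and the leaf count, at which point the cross-tree surgery becomes a minor case rather than the heart of the proof.
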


\begin{proof}

Let $T$ be a tree component of $G$ and $S$ be the set of vertices of $T$ with degree at least $3$. By Lemma \ref{lem2}, there exists no vertex with degree $2$ in $T$. If $|S|\geq 6$, then $|T|\geq 14$.

Assuming $P_k=x_1x_2\dots x_k$ is the longest path in $T$. Suppose that $k=5$ or $4$, then $P_7\not\subset T+x_2x_4$ and $\alpha^{\prime}(T+x_2x_4)\leq \alpha^{\prime}(T)$, a contradiction to Observation \ref{lem2.4} $(1)$. So $k\neq 5$ or $4$. Suppose that $k=3$ or $2$, then $T$ is a star or an isolated edge. Let $T_1$ be another tree component of $G$ with the longest path $y_1y_2\dots y_l$. If $l\leq 3$, then $P_7\not\subset T+T_1+x_2y_2$ and $\alpha^{\prime}(T+T_1+x_2y_2)\leq \alpha^{\prime}(T+T_1)$, a contradiction to Observation \ref{lem2.4} $(2)$. So we can assume $l\geq 6$. Let $H$ be the $P_7+tP_2$ in $G+x_2y_3$. Then $x_2y_3\in E(H)$ and $x_1\notin V(H)$. Hence, $x_2y_3$ is contained in the $P_7$ of $H$. Replace $y_3x_2x_1$ with $y_3y_2y_1$, there exists $P_7+tP_2$ in $G$, a contradiction. So $k\neq 3$ or $2$. Hence, $k\geq 6$.

If $k\geq 8$, then we have $|S|\geq 6$. If $k=7$ and $|S|=5$, then $\alpha^{\prime}(T)\leq \alpha^{\prime}(T+x_3v)$ and $\alpha_7(T)\leq \alpha_7(T+x_3v)$, a contradiction to Observation \ref{lem2.4} $(1)$, where $v$ is a leaf of $T$ with $N_{T}(v)=\{x_4\}$. Hence, $|S|\geq 6$. Suppose that $k=6$. If $|S|=4$, then $P_7\not\subset T+x_3x_5$ and $\alpha^{\prime}(T+x_3x_5)\leq \alpha^{\prime}(T)$, a contradiction to Observation \ref{lem2.4} $(1)$. If $|S|=5$, then assuming $S=\{x_2,x_3,x_4,x_5,v\}$. If $v\in N(x_3)$, then $P_7\not\subset T+x_3x_5$ and $\alpha^{\prime}(T+x_3x_5)\leq \alpha^{\prime}(T)$, a contradiction to Observation \ref{lem2.4} $(1)$. If $v\in N(x_4)$, then $P_7\not\subset T+x_2x_4$ and $\alpha^{\prime}(T+x_2x_4)\leq \alpha^{\prime}(T)$, a contradiction to Observation \ref{lem2.4} $(1)$. Hence, $|S|\geq 6$. Then $|T|\geq 14$.
\end{proof}

\begin{proof}[$\mathbf {Proof\ of\ Theorem\ \ref{thm9}}$]

Suppose $G$ is $(P_7+tP_2)$-saturated.

If $|V_0(G)|\geq 2$, by Theorem \ref{thm1}, then sat$(n,P_7+tP_2)=3t+25$ and $G=K_8+cK_3+F+\overline{K}_{n-3t_3-|F|-8} \in Sat(n,P_7+tP_2)$ for $n\geq \frac{14}{13}(3t+25)$, where $c$ is a non-negative integer, $F$ is the disjoint union of fans with order at least $7$ and $\alpha^{\prime}(cK_3+F)=t-1$.

If $|V_0(G)|=1$, $|E(G)|=\frac{1}{2}\sum_{v\in G}d(v)> n-1\geq $ min$\{3t+25,n-\lfloor \frac{n}{14} \rfloor\}$.

If $|V_0(G)|=0$, suppose $|E(G)|<n-\lfloor \frac{n}{14} \rfloor$. Let $G=G_0+(T_1+T_2+\dots +T_k)$ be a $(P_7+tP_2)$-saturated graph, where $T_1,T_2,\dots ,T_k$ are all the tree components of $G$. Hence,

\begin{center}
$|E(G)|=|E(G_0)|+\sum ^k_{i=1}|E(T_i)|\geq |G_0|+\sum ^k_{i=1}(|T_i|-1)=|G|-k=n-k$.
\end{center}

We have $k>\lfloor \frac{n}{14} \rfloor$ as $|E(G)|<n-\lfloor \frac{n}{14} \rfloor$. If $k\geq 2$, $|T_i|\geq 14$ by Lemma \ref{lem8}. Thus, $n\geq 14k$, a contradiction. If $k=1$, $|E(G)|\geq n-1> n-\lfloor \frac{n}{14} \rfloor$ as $n\geq \frac {14}{5}t+27$. Hence, $|E(G)|\geq n-\lfloor \frac{n}{14} \rfloor$.

On the other hand, set $n=14q+r$, where $q=\lfloor \frac{n}{14} \rfloor$ and $r=0,1,\dots ,13$. Since $n\geq \frac {14}{5}t+27$, we have $t\leq 5q+\lceil \frac {5}{14}(r-27)\rceil \leq 5q+ \lceil \frac {5}{14}(13-27)\rceil = 5q-5$. Consider the graph $H=(q-1)T+T^*$, where $T$ and $T^*$ is shown in Figure \ref{Fig.5} and Figure \ref{Fig.6}, respectively. Forming a $P_7$ requires at most $5$ disjoint $P_2$. And $\alpha _7(H+xy)=5q-5$, where $x,y$ are shown in Figure \ref{Fig.5} and Figure \ref{Fig.6}, respectively. It is obvious that $H$ contains no copy of $P_7$ and $H+e$ contains a copy of $P_7+(5q-5)P_2$ for any $e\in E(\overline H)$. Thus, $H\in Sat(n,P_7+tP_2)$. And $|H|=n-\lfloor \frac{n}{14} \rfloor$. Hence, $sat(n,P_7+tP_2)=n-\lfloor \frac{n}{14} \rfloor$.
\begin{figure}[htbp]
  \centering
  \begin{minipage}[t]{0.45\textwidth}
    \centering
    \includegraphics[width=\textwidth]{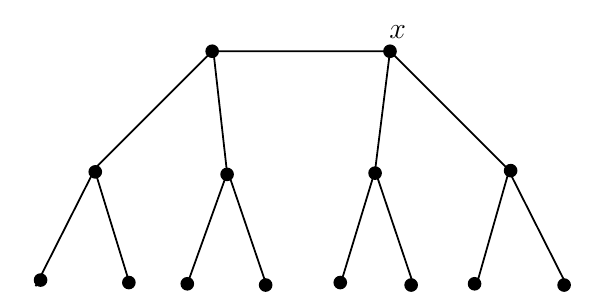} 
     \caption{$T$}\label{Fig.5}
  \end{minipage}
  \hfill
  \begin{minipage}[t]{0.45\textwidth}
    \centering
    \includegraphics[width=\textwidth]{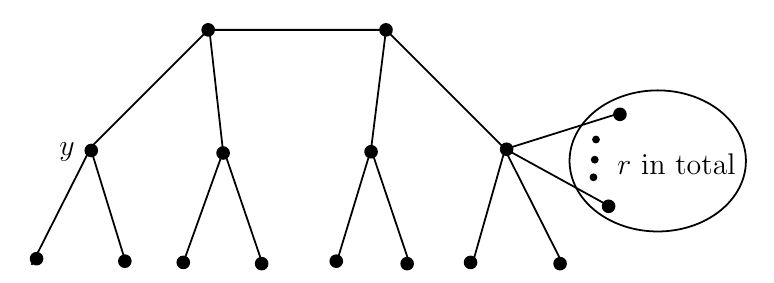} 
    \caption{$T^*$}\label{Fig.6}
  \end{minipage}
\end{figure}

This completes the proof of Theorem \ref{thm9}.
\end{proof}

\section*{Declaration of competing interest}

This paper does not have any conflicts to disclose.

\section*{Acknowledgements}

This paper is supported by the National Natural Science Foundation of China (No.~12331013 and No.~12161141005); and by Beijing Natural Science Foundation (No.~1244047), China Postdoctoral Science Foundation (No.~2023M740207).

\def\polhk#1{\setbox0=\hbox{#1}{\ooalign{\hidewidth
\lower1.5ex\hbox{`}\hidewidth\crcr\unhbox0}}}


\begin{thebibliography}{10}

\bibitem{5}C. Berge, Sur le couplage maximum d'un graphe, \emph{C. R. Acad. Sci.,} \textbf{Paris 247}(1958), 258-259. (In French.)

\bibitem{2}G. Chen, J. R. Faudree, R. J. Faudree, R. J. Gould, M. S. Jacobson, C. Magnant, Results and problems on saturation numbers for linear forests, \emph{Bull. Inst. Comb. Appl.,} \textbf{75}(2015), 29-46.

\bibitem{6}P. Erd\H{o}s, A. Hajnal, J. W. Moon, A problem in graph theory, \emph{Am. Math. Mon,} \textbf{71}(1964),1107-1110.

\bibitem{3}Q. Fan, C. Wang, Saturation Numbers for Linear Forests $P_5\cup tP_2$, \emph{Graphs Combin.,} \textbf{31}(2015), 2193-2200.


\bibitem{10}Z. He, M. Lu, Z.-Q. Lv, Minimum $tP_3$-saturation graphs, \emph{Discrete Appl. Math.,} \textbf{327}(2023), 148-156.

\bibitem{8}A. Jambulapati, R. Faudree, A collection of results on saturation numbers, \emph{J. Combin. Math. Combin. Comput.,} \textbf{99}(2016), 167-185.

\bibitem{1}L. K\'{a}szonyi, Z. Tuza, Saturated graphs with minimal number of edges, \emph{J. Graph Theory,} \textbf{10}(1986), 203-210.


\bibitem{9}M. Liu, Z.-Q. Hu, Saturation number for linear forest $2P_3\cup tP_2$, \emph{Wuhan Univ. J. Nat. Sci.,} \textbf{24(4)}(2019), 283-289.

\bibitem{7}F.-F. Song, Saturation numbers for linear forests $P_4\cup P_3\cup tP_2$, \emph{Util. Math,} \textbf{104}(2017), 175-186.


\bibitem{4}J.-R. Yan, J. Zhen, Saturation Numbers for Linear Forests $P_6$ + $tP_2$, \emph{Czechoslovak Math. J.,} \textbf{73}(2023), 1007-1016.



\end{thebibliography}
\end{document}